\definecolor{webgreen}{rgb}{0,.5,0}
\definecolor{webbrown}{rgb}{.6,0,0}
\tikzset{circle node/.style = {circle,inner sep=1pt,draw, fill=white},
        X node/.style = {fill=white, inner sep=1pt},
        dot node/.style = {circle, draw, inner sep=5pt}
        }
\newtheorem{theorem}{Theorem}
\newtheorem{proposition}[theorem]{Proposition}
\newtheorem{conjecture}[theorem]{Conjecture}
\theoremstyle{definition}
\newtheorem{example}[theorem]{Example}
\newcommand{\seqnum}[1]{\href{http://oeis.org/#1}{\underline{#1}}}
\begin{document}

\begin{center}
\vskip 1cm{\LARGE\bf Conjectures and results on some generalized Rueppel sequences} \vskip 1cm \large
Paul Barry\\
School of Science\\
Waterford Institute of Technology\\
Ireland\\
\href{mailto:pbarry@wit.ie}{\tt pbarry@wit.ie}
\end{center}
\vskip .2 in

\begin{abstract} In this note we use the analogy between the Catalan sequence and the Rueppel sequence to derive a variety of conjectures surrounding the Hankel transforms of a number of sequences closely related to the Rueppel sequence. Use is made of the representation of suitable generating functions by Stieltjes continued fractions. We define polynomial sequences by introducing parameters that define generalized Rueppel sequences, and we show that such polynomials have coefficient arrays that are Riordan arrays. Finally we conjecture the form of a product of Hankel transforms arising from the Rueppel sequence. \end{abstract}

\section{Introduction}
In a previous paper \cite{SomePB}, we studied aspects of the Rueppel sequence and we proposed a number of conjectures. Many of these conjectures have subsequently been proven \cite{SomeAHS}. In this note, we continue to study the Rueppel sequence, and we put forward a number of further conjectures and results. Roland Bacher, in a paper on the links between the paper-folding sequence and the Catalan numbers \cite{Bacher}, enunciates the principle ``Paperfolding=Catalan modulo $2\,\,+$ signs given by $2$-automatic sequences''. This paper is also inspired by this principle. We do not touch upon automaticity \cite{Automatic} here, but it should be understood as an underlying theme.
 
The Rueppel sequence $r_n$ \seqnum{A036987}, which begins
$$1, 1, 0, 1, 0, 0, 0, 1, 0, 0, 0,\ldots,$$ can be defined by 
$$r_n = C_n \,\bmod\,2.$$ Here, $C_n$ represents the $n$-th Catalan number 
$$C_n=\frac{1}{n+1} \binom{2n}{n}.$$ This is \seqnum{A000108} in the On-Line Encyclopedia of Integer Sequences \cite{SL1, SL2}. The sequence of Catalan numbers $C_n$ begins 
$$1,1,2,5,14,42,\ldots.$$ It is one of the most important sequences in combinatorics \cite{Stanley}. 
The Rueppel sequence $r_n$ has generating function 
$$r(x)=\sum_{n=0}^{\infty} x^{2^n-1},$$ while the Catalan numbers $C_n$ have their generating function given by 
$$c(x)=\frac{1-\sqrt{1-4x}}{2x}.$$ 

The Hankel transform of the Catalan numbers is the sequence $C_n^{(h)}=|C_{i+j}|_{0 \le i,j \le n}$, which begins 
$$1,1,1,1,\ldots.$$
In fact, $C_n^{(h)}=1$ for all $n$.

The Hankel transform of the Rueppel numbers $r_n^{(h)}=|r_{i+j}|_{0\le i,j \le n}$ begins
$$1, -1, -1, 1, 1, -1, -1, 1, 1, -1, -1,\ldots.$$
We have $r_n^{(h)}=(-1)^{\binom{n+1}{2}}$. We can consider $r_n^{(h)}$ to be a particular signed version of $C_n^{(h)}$.

This correspondence may be extended to other sequences related to the Catalan and the Rueppel sequences.

\begin{example} The sequence with generating function $\frac{1}{c(x)}$ expands to give the sequence that begins
$$1, -1, -1, -2, -5, -14, -42, -132, -429, -1430, \ldots.$$ The generating function of this sequence can also be expressed as $1-xc(x)$. The Hankel transform of this sequence begins
$$1, -2, 3, -4, 5, -6, 7, -8, 9, -10, \ldots,$$ which is $(-1)^n (n+1)$ with generating function
$\frac{1}{(1+x)^2}$.

The sequence with generating function $1-xr(x)$, which begins
$$1, -1, -1, 0, -1, 0, 0, 0, -1, 0, 0,\ldots,$$ has a Hankel transform that begins
$$1, -2, 3, 2, -3, 4, 3, 2, -3, 4, -5,\ldots.$$
We regard this as a ``Rueppel'' version of the sequence $1, -2, 3, -4, 5, -6, 7,\ldots$. Note that both these Hankel transforms give the sequence
$$1,0,1,0,1,0,1,0,\ldots$$ when taken modulo $2$.
\end{example}

\section{Continued fractions}
Additional insight can be achieved by looking at the continued fraction expressions \cite{Wall} for the generating functions of the sequences considered so far.
Thus the generating function of the Catalan numbers $c(x)$ can be expressed as
$$c(x)=\cfrac{1}{1-\cfrac{x}{1-\cfrac{x}{1-\cfrac{x}{1-\cdots}}}}.$$
We express this as $$c(x)=\mathcal{S}(1,1,1,\ldots),$$ where the notation $\mathcal{S}(a,b,c,\ldots)$ indicates that this is a Stieltjes continued fraction with the indicated parameters.
Alternatively, we have
$$c(x)=\cfrac{1}{1-x-\cfrac{x^2}{1-2x-\cfrac{x^2}{1-2x-\cfrac{x^2}{1-2x-\cdots}}}}.$$ We can express this as
$$c(x)=\mathcal{J}(1,2,2,2,\ldots; 1,1,1,\ldots),$$ where the notation $\mathcal{J}(a,b,c,\ldots;u,v,w,\ldots)$ indicates that this is a Jacobi continued fraction.

The continued fraction expression for the generating function of the Rueppel sequence is given by
$$r(x)=\mathcal{S}(1, -1, -1, 1, -1, 1, -1, 1, 1, -1, 1, \ldots).$$ The coefficient sequence in this expression
is given by $2 a_n-1$, where $a_n=\seqnum{A088567}(n+2) \bmod 2$. Here, the sequence \seqnum{A088567}, which begins
$$1, 1, 1, 2, 2, 3, 4, 5, 6, 7, 9, 10, 13, 14, 18,\ldots,$$ counts the number of ``non-squashing'' partitions \cite{Different, Squash} of $n$ into distinct parts. It also counts the number of binary partitions of $n$. The generating function of \seqnum{A088567}$(n+2)$ is given by
$$\frac{1}{1-x}+\sum_{k=1}^{\infty} \frac{x^{3\cdot 2^{k-1}-2}}{\prod_{j=0}^k 1-x^{2^j}}.$$ 

Alternatively, we have
$$r(x)=\mathcal{J}(1, -2, 0, 0, 2, 0, -2, 0, 2, -2, 0,\ldots;-1,-1,-1,\ldots)$$ as a Jacobi continued fraction. 
The sequence beginning $1, -2, 0, 0, 2, 0, -2, 0, 2, -2, 0,\ldots$ is $-\seqnum{A110036}(n+1)$. We have 
$|\seqnum{A110036}(n+2)|= 2(\seqnum{A088567}(n) \bmod 2)$.
Knowing the parameters of a Stieltjes or a Jacobi continued fraction allows us to find an expression for the Hankel transform of the expansion of the generating function given by the  continued fraction (see Appendix for more details on continued fractions and Hankel transforms).
\begin{example} The sequence 
$$1,-1,-1,-2,-5,-14,-42,\ldots$$ has its generating function given by 
$$1-xc(x)=\mathcal{S}\left(-1, 2, \frac{1}{2},\frac{3}{2},\frac{2}{3},\frac{4}{3}, \frac{3}{4} , \frac{5}{4},\ldots\right).$$ 
Alternatively, we have 
$$1-xc(x)=\mathcal{J}\left(-1, \frac{5}{2}, \frac{13}{6}, \frac{25}{12},\frac{41}{20}, \frac{61}{30},\ldots; - 2, \frac{3}{4},\frac{8}{9}, \frac{15}{16}, \frac{24}{25}, \frac{35}{36},\ldots\right).$$
On the other hand, we have 
$$1-xr(x)=\mathcal{S}\left(-1, 2, - \frac{1}{2}, - \frac{3}{2}, \frac{2}{3}, - \frac{2}{3}, \frac{3}{2}, - \frac{3}{2}, \frac{2}{3}, \frac{4}{3},\ldots\right),$$ and 
$$1-xr(x)=\mathcal{J}\left(-1,\frac{3}{2},-\frac{5}{6},\frac{5}{6},-\frac{5}{6}, \frac{7}{12},\ldots;- 2,\frac{3}{4},-\frac{4}{9},-\frac{9}{4},\frac{8}{9}, -\frac{9}{16},\ldots\right).$$  
\end{example}
\section{Some new conjectures}
In the Introduction, we saw that the Hankel transforms of the Rueppel sequence and the sequence with generating function $1-xr(x)$ are ``Rueppel versions'' of the Hankel transforms of the Catalan numbers and the sequence with generating function $1-xc(x)$. We now consider two other ``Rueppel analogs'' of Catalan related sequences.
\begin{example} $\mathbf{1-x+x^2 r(x^2)}$. We first look at the Catalan analog. Thus we consider the sequence 
$$1,-1,1,0,1,0,1,0,2,0,5,0,14,0,\ldots$$ with generating function 
$$g(x)=1-x+x^2 c(x^2).$$ 
The Hankel transform of this sequence begins 
$$1, 0, -1, -2, -3, -4, -5, -6, -7, -8, -9,\ldots,$$ with generating function 
$$1-\frac{x^2}{(1-x)^2}=\frac{1-2x}{(1-x)^2}.$$ 
We now consider the Hankel transform of the sequence
$$1, -1, 1, 0, 1, 0, 0, 0, 1, 0, 0, 0, 0, 0, 0, 0, 1, 0,\ldots$$ with generating function 
$$1-x+x^2 r(x^2).$$
We find that this Hankel transform begins 
$$1, 0, -1, 0, 1, 2, -1, 0, 1, 2, 3, -2, 1, 2, -1, 0, 1, 2, 3, -2, -3,\ldots.$$ 
\begin{conjecture} The Hankel transform of the sequence with generating function $1-x+x^2r(x^2)$ is a signed version of \seqnum{A037834}.
\end{conjecture}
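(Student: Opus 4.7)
The plan is to establish the conjecture by obtaining an explicit Stieltjes continued fraction expansion for $g(x) = 1 - x + x^2 r(x^2)$ and then invoking the standard Hankel determinant product formula
\[
H_n \;=\; \prod_{k=1}^{n}(\alpha_{2k-1}\,\alpha_{2k})^{\,n-k+1}
\]
associated with the $\mathcal{S}$-fraction $g(x) = \mathcal{S}(\alpha_1,\alpha_2,\ldots)$. Since the Stieltjes coefficients of $r(x)$ listed in Section~2 are 2-automatic (being controlled by the parity of $\seqnum{A088567}$), I expect the $\alpha_i$ of $g(x)$ to inherit a comparable 2-automatic structure. On taking absolute values and substituting into the formula above, this should yield the 2-automatic sequence $\seqnum{A037834}$, while the signs are recovered from the sign pattern of the $\alpha_i$, in accordance with the Bacher-style principle underlying the whole paper.

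Concretely, the first step is to compute the continued fraction of $g(x)$. One route is to start from the known $\mathcal{J}$-fraction for $r(x)$ given in Section~2, perform the substitution $x \mapsto x^2$, re-expand the resulting expression as a power series, and then apply the same sort of linear-fractional manipulation used in Example~2 to pass from $r(x)$ to $1-xr(x)$ in order to accommodate the affine perturbation $1-x+x^2(\cdot)$. Once a formula of the form $g(x) = \mathcal{S}(\alpha_1,\alpha_2,\ldots)$ is available with the $\alpha_i$ described explicitly, the product formula delivers $H_n$ as a signed product indexed dyadically by the binary expansion of $n$, which one then matches against the conjectured form of $\seqnum{A037834}$; the modulo-$2$ consistency check (both transforms reduce to $1,0,1,0,\ldots$) already confirms the underlying unsigned combinatorics at the level of parity.

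The principal obstacle is the first step. The substitution $x \mapsto x^2$ does not preserve the $\mathcal{S}$-form transparently: the resulting continued fraction is initially of Thron/T-fraction type in $x$, and returning it to an ordinary $\mathcal{S}$-fraction requires a contraction step whose new coefficients must themselves be identified with a 2-automatic sequence. A useful alternative would be to bypass continued fractions altogether and compute the Hankel determinants directly by Lindstr\"om--Gessel--Viennot: since the coefficients of $g(x)$ are nonzero only at positions $0$, $1$, and $2^k$ for $k \ge 1$, each Hankel matrix is extremely sparse, and its determinant should factor as a signed count of families of non-intersecting lattice paths whose endpoints are pinned to the handful of ``anchor'' columns corresponding to the support of $g$. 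This combinatorial route may provide a cleaner identification with $\seqnum{A037834}$, at the cost of tracking the sign decoration by an orientation argument rather than reading it from the continued fraction.
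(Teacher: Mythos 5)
This statement is a \emph{conjecture} in the paper: the author offers no proof, only the first twenty-odd terms of the Hankel transform as numerical evidence. Your proposal does not close that gap --- it is a research plan whose every substantive step (computing the continued fraction of $g(x)=1-x+x^2r(x^2)$, identifying its coefficients as a $2$-automatic sequence, matching the resulting product against \seqnum{A037834}) is deferred rather than carried out. The appeal to a ``modulo-$2$ consistency check'' proves essentially nothing: parity agreement of two integer sequences is an extremely weak constraint and cannot confirm ``the underlying unsigned combinatorics.''

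More seriously, your primary route is blocked before it starts. The Hankel transform in question begins $1, 0, -1, 0, 1, 2, -1, 0, \ldots$; it contains zeros (e.g.\ $H_1=0$) followed by nonzero entries ($H_2=-1$). The product formula $h_n = a_0^{n+1}\prod_{k=1}^n \beta_k^{n+1-k}$ (equation~(\ref{Kratt}) in the Appendix) shows that if any $\beta_k$ vanishes then \emph{all} subsequent Hankel determinants vanish; conversely, a sequence whose Hankel determinants vanish at some index but not at later ones admits no classical $\mathcal{J}$-fraction, and hence no $\mathcal{S}$-fraction with the stated product formula. So there is no Stieltjes expansion of $g(x)$ from which the Hankel transform can be read off in the way you propose, and the ``contraction step'' you worry about is not the real obstacle --- the expansion you are trying to contract does not exist. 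Any successful attack must either work with a degenerate/interrupted continued fraction formalism that handles vanishing Hankel determinants, or proceed combinatorially (your Lindstr\"om--Gessel--Viennot suggestion is plausible in spirit, but as written it is a single sentence with no construction of the lattice-path model, no identification of the ``anchor'' structure with the binary-digit statistic defining \seqnum{A037834}, and no sign analysis).
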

The sequence \seqnum{A037834} is given by the numbers of $i$ such that $|d(i) - d(i-1)| = 1$, where $\sum_{i=0}^m d(i)2^i$ is the base-$2$ representation of $n$.
\end{example}
\begin{example} $\mathbf{1+x-x^2 r(x^2)}$. In this example, we look at the sequence 
$$1, 1, -1, 0, -1, 0, 0, 0, -1, 0, 0, 0, 0, 0, 0, 0, -1, 0,\ldots$$ with generating function 
$$1+x-x^2 r(x^2).$$ 
Since we have $r(x)=1+x r(x^2)$, we deduce that 
$$1+x-x^2r(x)=1+2x-rg(x)=1-xgr(x)+2x.$$ Calculating, we find that the Hankel transform of this sequence begins 
$$1, -2, 3, 2, -3, 4, 3, 2, -3, 4, -5, -4, -3, 4, 3, 2, \ldots.$$  
\begin{proposition} The sequence with generating function $1+x-x^2r(x^2)=1-xr(x)+2x$ has the same Hankel transform as the sequence with generating function $1-xr(x)$.
\end{proposition}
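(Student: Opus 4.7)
The plan is to recognize that $b(x) := 1 + x - x^2 r(x^2)$ is nothing more than $a(-x)$, where $a(x) := 1 - xr(x)$, and then invoke the fact that Hankel transforms are invariant under the sign twist $c_n \mapsto (-1)^n c_n$.

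First I would use the functional equation $r(x) = 1 + xr(x^2)$ (already used in the paragraph preceding the proposition) to rewrite $a(x) = 1 - x - x^2 r(x^2)$. Since both $x^2$ and $r(x^2)$ are invariant under $x \mapsto -x$, this presentation makes it immediate that
\[
a(-x) \;=\; 1 + x - x^2 r(x^2) \;=\; b(x).
\]
Equivalently, one can note that the only nonzero odd-indexed coefficient of $a$ is $a_1 = -1$, so $a(-x) - a(x) = 2x$, and therefore $b = a + 2x = a(-x)$. In either form the conclusion is $b_n = (-1)^n a_n$ for every $n \geq 0$.

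Second, once this sign relation is in hand, I would translate it to the Hankel setting. The Hankel matrix $H_n^b = (b_{i+j})_{0 \le i,j \le n}$ has entries $(-1)^{i+j} a_{i+j} = (-1)^i (-1)^j a_{i+j}$, so it factors as
\[
H_n^b \;=\; D_n\, H_n^a\, D_n, \qquad D_n := \mathrm{diag}\bigl((-1)^i\bigr)_{0 \le i \le n}.
\]
Taking determinants gives $\det H_n^b = (\det D_n)^2 \det H_n^a = \det H_n^a$, which is precisely the assertion that the two Hankel transforms coincide.

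There is no real obstacle: the entire argument reduces to spotting that the seemingly ad hoc addition of $2x$ is exactly the perturbation that converts $a(x)$ into $a(-x)$, after which the diagonal-conjugation observation is standard. The only subtle point worth flagging is that this parity symmetry is specific to this sequence and relies on the fact that $1 - xr(x)$, apart from its linear term, is an even function of $x$.
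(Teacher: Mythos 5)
Your proof is correct and takes essentially the same approach as the paper, which likewise observes that the expansion of $1-xr(x)+2x$ is $(-1)^n$ times that of $1-xr(x)$ and that sequences related by this sign twist share the same Hankel transform. You merely supply the details the paper leaves implicit, namely the use of $r(x)=1+xr(x^2)$ to identify $b(x)=a(-x)$ and the factorization $H_n^b=D_nH_n^aD_n$ with $(\det D_n)^2=1$.
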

\begin{proof} The expansion of $1-xr(x)+2x$ is $(-1)^n$ times the expansion of $1-xr(x)$. Sequences with this property share the same Hankel transforms.
\end{proof}

In the Catalan ``domain'', we find that the sequence 
$$1,1,-1,0,-1,0-2,0,-5,0,-14,0,\ldots$$ with generating function $1+x-x^2c(x^2)$ has the Hankel transform 
$$1,-2,3,-4,5,-6,\ldots.$$ 

\end{example}
\begin{example} $\mathbf{\frac{1}{1+xr(x)}}$. We first consider the sequence with generating function $\frac{1}{1+x c(x)}$. This is sequence \seqnum{A126983}
$$1, -1, 0, -1, -2, -6, -18, -57, -186, -622, -2120,\ldots$$ with generating sequence $\frac{1}{1+xc(x)}=1-F(x)$, where $F(x)$ is the generating function of the Fine numbers \seqnum{A000957}. The Hankel transform of \seqnum{A126983} begins
$$1,1,1,\ldots.$$ 
Looking now at the sequence \seqnum{A339422}
$$1, -1, 0, 1, -2, 2, 0, -3, 4, -2, -2, 6, -6, 0, 8, -11,\ldots$$ with generating function $\frac{1}{1+xr(x)}$, a calculation shows that its Hankel transform $h_n$ begins 
$$1, -1, 1, 1, -1, 1, 1, 1, -1, 1, -1, -1, -1, 1, 1, 1, -1, 1, -1, -1, 1,\ldots.$$ 
\begin{conjecture}
We have 
$$ \left|\frac{(-1)^{\binom{n}{2}}-h_n}{2}\right|=\seqnum{A268411},$$ 
where \seqnum{A268411} gives the parity of the number of runs of $1$'s in the binary representation of $n$.
\end{conjecture}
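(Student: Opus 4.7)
The plan is to attack the conjecture via continued fractions, exploiting a structural simplification that is not immediately apparent from the form $1/(1+xr(x))$. First, I would use the identity $xr(x) = \sum_{k \geq 0} x^{2^k}$, which follows immediately from $r(x) = \sum_{n \geq 0} x^{2^n - 1}$, to rewrite
$$1 + xr(x) = 1 + \sum_{k \geq 0} x^{2^k} = 1 + x + x^2 + x^4 + x^8 + \cdots.$$
So the task reduces to computing the Hankel transform of the reciprocal of a lacunary power series whose support (beyond the constant term) is exactly the powers of $2$. This is a classical setting in which Hankel determinants are known to take values in a small set and to reflect base-$2$ combinatorics, which matches the fact that the conjectural formula only involves binary data (parity of binary runs) and signs.

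Second, I would search for an explicit Jacobi continued fraction $\mathcal{J}(b_0, b_1, \ldots; \lambda_1, \lambda_2, \ldots)$ for $\frac{1}{1+xr(x)}$. Since the numerical evidence shows $h_n \in \{\pm 1\}$, one expects $\lambda_k \in \{\pm 1\}$ throughout, with the Hankel transform then computed by the standard formula $h_n = \prod_{k=1}^{n} \lambda_k^{n-k+1}$. I would compute the first twenty or so CF coefficients symbolically, looking for a $2$-automatic description of the sign sequence $\epsilon_k := \operatorname{sgn}(\lambda_k)$. In light of the fact that the CF coefficients of $r(x)$ itself are already expressible through the binary partition sequence \seqnum{A088567}, a similar fractal pattern for $(\epsilon_k)$ is plausible.

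Third, once $(\lambda_k)$ is identified, the conjecture becomes the identity
$$\prod_{k=1}^{n} \lambda_k^{n-k+1} = (-1)^{\binom{n}{2} + a_n},$$
where $a_n$ denotes \seqnum{A268411}$(n)$. Since both sides lie in $\{\pm 1\}$, this is an additive identity mod $2$ between two integer-valued functions of $n$, which I would establish by induction on $n$ using a block decomposition of $n$ in base $2$. The function $a_n$ satisfies simple recursions under $n \mapsto 2n$ and $n \mapsto 2n+1$ coming from how binary runs behave at the boundary of a new low-order digit, and a matching recursion for $\prod \lambda_k^{n-k+1}$ should follow from the conjectured automatic description of $(\epsilon_k)$.

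The main obstacle will clearly be step two: obtaining a closed form for the $\lambda_k$. The CF coefficients of $r(x)$ itself already involve the somewhat subtle sequence \seqnum{A088567}, and inverting $1+xr(x)$ typically produces an equally intricate pattern, so even guessing the right description of $(\epsilon_k)$ may require substantial experimental work. A viable alternative, if the continued fraction coefficients resist a clean description, would be to avoid them altogether and instead adapt the techniques of \cite{SomeAHS}, which proved the original Rueppel Hankel conjectures by a direct analysis of block structure in the Hankel matrix driven by the $2$-automaticity of the entries; this approach is well suited to the present situation because both the input sequence \seqnum{A339422} and the target sequence \seqnum{A268411} are $2$-automatic.
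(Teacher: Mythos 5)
This statement is one of the paper's \emph{conjectures}: the paper offers no proof of it at all, only numerical evidence, so there is no argument of the author's to compare yours against. The real question is whether your proposal closes the conjecture, and it does not. What you have written is a research plan whose decisive step --- an explicit, provable description of the Jacobi (or Stieltjes) continued-fraction coefficients of $\frac{1}{1+xr(x)}$ --- is left entirely open; you say yourself that even \emph{guessing} the right description of the sign sequence $(\epsilon_k)$ may require substantial experimental work. Without that identification, step three has no content: there is no recursion for $\prod_k \lambda_k^{\,n-k+1}$ to match against the run-counting recursions for \seqnum{A268411} under $n \mapsto 2n$ and $n \mapsto 2n+1$. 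The preliminary rewriting $1+xr(x)=1+\sum_{k\ge 0}x^{2^k}$ is correct but is only a restatement of the definition, and the appeal to the Allouche--Han--Shallit block-matrix method is a pointer to a technique, not an argument.

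Two further soft spots. First, the Hankel-determinant formula $h_n=\prod_{k=1}^n\lambda_k^{\,n-k+1}$ is only available once one knows the $J$-fraction exists with all $\lambda_k$ defined and nonzero, which is equivalent to all $h_n$ being nonzero --- itself part of what must be established here. Second, your remark that ``both sides lie in $\{\pm 1\}$'' presupposes the conclusion: a priori the conjectured identity $\bigl|((-1)^{\binom{n}{2}}-h_n)/2\bigr|\in\{0,1\}$ only constrains $h_n$ to lie in $\{(-1)^{\binom{n}{2}},\,(-1)^{\binom{n}{2}}\pm 2\}$, so $h_n\in\{\pm 1\}$ (and hence $\lambda_k\in\{\pm 1\}$ via $\lambda_n=h_nh_{n-2}/h_{n-1}^2$) is an observation from data, not something you may assume in the proof. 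The overall strategy is reasonable and consistent with how related Rueppel--Hankel conjectures have been settled, but as it stands the proposal leaves the conjecture exactly as open as the paper does.
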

Thus we conjecture that  $\frac{(-1)^{\binom{n}{2}}-h_n}{2}$ is a signed version of the sequence \seqnum{A268411}. This signed sequence begins 
 $$0, 1, -1, -1, 1, 0, -1, -1, 1, 0, 0, 0, 1, 0, -1, -1,\ldots.$$ 
There is evidence to suggest that the positions of the sign changes are governed by the sequence \seqnum{A043725}, which gives the numbers $n$ such that the number of runs in base $2$ representation of $n$ is congruent to $1 \bmod 4$.

\end{example}
\begin{example} $\mathbf{1-\frac{x}{r(x^2)}}$. We consider first the sequence 
$$1, -1, 0, 1, 0, 1, 0, 2, 0, 5, 0,\ldots$$ with generating function 
$$1-\frac{x}{c(x^2)}=\frac{2-x-x\sqrt{1-4x^2}}{2}.$$ The Hankel transform of this sequence begins 
$$1, -1, -1, 4, 1, -9, -1, 16, 1, -25, -1, 36, 1, -49, -1, 64, 1, -81, -1, 100,\ldots.$$ 
Turning now to the sequence with generating function $1-\frac{x}{r(x^2)}$, this sequence begins 
$$1, -1, 0, 1, 0, -1, 0, 2, 0, -3, 0, 4, 0, -6, 0, 10, 0,\ldots.$$ The Hankel transform of this sequence begins 
$$1, -1, -1, 0, 1, -1, -1, 0, 1, -1, -1, 0, 1, -1, -1, 0, 1, -1, -1, 0, \ldots.$$
We then have the following conjecture.
\begin{conjecture} The Hankel transform of the sequence with generating function $1-\frac{x}{r(x^2)}$ is the periodic sequence 
$$1, -1, -1, 0, 1, -1, -1, 0, 1, -1, -1, 0,\ldots.$$ 
\end{conjecture}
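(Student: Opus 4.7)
The key observation is that $f(x) = 1 - x/r(x^2)$ has a very sparse Taylor expansion: since $1/r(x^2)$ is a series in $x^2$, the coefficient $a_n$ of $x^n$ in $f(x)$ is zero at every positive even index, while $a_0 = 1$ and $a_{2k+1} = -c_k$, where $\sum_{k \geq 0} c_k y^k = H(y) := 1/r(y)$. The plan is to exploit this sparseness to reduce the Hankel determinant $h_n = \det(a_{i+j})_{0 \le i,j \le n}$ to Hankel determinants built from the single sequence $(c_k)$.

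Concretely, I would simultaneously permute the rows and columns of the Hankel matrix $(a_{i+j})$ so that even indices precede odd ones. The resulting matrix then acquires a $2\times 2$ block structure whose top-left block $E$ has a single $1$ in position $(0,0)$ and zeros elsewhere (from $a_0 = 1$), whose off-diagonal block $O$ is a (rectangular) Hankel matrix in the entries $a_{2k+1} = -c_k$, and whose bottom-right block is identically zero. Because the bottom-right block is zero and $E$ has nearly all zero entries, the Leibniz expansion collapses dramatically; a careful bookkeeping of the surviving permutations yields the closed forms
$$h_{2m-1} = (-1)^m\, \tilde h_m^{\,2}, \qquad h_{2m} = (-1)^m\, (\det Q_m)^{2},$$
where $\tilde h_m = \det(c_{i+j})_{0 \le i,j \le m-1}$ is the Hankel transform of $H$ and $Q_m = (c_{i+j+1})_{0 \le i,j \le m-1}$ is its shifted Hankel matrix. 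The conjectured period-four pattern $1,-1,-1,0,\ldots$ is thereby reduced to two numerical statements about $H$: that $\tilde h_m = 0$ for even $m \ge 2$ while $|\tilde h_m| = 1$ for odd $m$, and that $|\det Q_m| = 1$ for every $m \ge 1$.

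To establish these statements I would then try to leverage the self-similar identity $r(y) = 1 + y r(y^2)$, which rephrases as the functional equation $H(y)\bigl(H(y^2) + y\bigr) = H(y^2)$. Since $H(y^2)$ involves only even powers of $y$, applying the same even/odd block decomposition to the Hankel matrices of $H$ itself should produce recursions that halve $m$ at each step, supporting an induction on the $2$-adic valuation of $m$. The principal obstacle is precisely the vanishing of $\tilde h_m$ on even $m$: $H(y)$ does not admit a classical Stieltjes or Jacobi continued fraction (already $\tilde h_2 = c_0 c_2 - c_1^2 = 0$), so the usual continued-fraction / orthogonal-polynomial machinery for computing Hankel transforms is not directly available. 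One must instead work with a degenerate or ``stairstep'' continued fraction in the spirit of Pad\'e block decompositions, or prove the two determinantal identities via matrix manipulations of Sylvester / Desnanot--Jacobi type, or else fall back on the $2$-automatic structure of $r$ in the spirit of the companion results \cite{SomeAHS}.
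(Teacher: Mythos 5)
The statement you are addressing is left as a \emph{conjecture} in the paper; no proof is offered there, so your attempt can only be measured against the numerical evidence. Your parity reduction is correct and constitutes genuine progress. Writing $a_0=1$, $a_{2k}=0$ for $k\ge 1$, and $a_{2k+1}=-c_k$ with $\sum_{k\ge 0} c_k y^k = 1/r(y)$, the even/odd permutation of rows and columns does produce the block shape you describe, and the resulting evaluations $h_{2m-1}=(-1)^m\,\tilde h_m^{\,2}$ and $h_{2m}=(-1)^m(\det Q_m)^2$ check against the data: for instance $h_3=0$ because rows $1$ and $3$ of the $4\times 4$ Hankel matrix are negatives of each other, matching $\tilde h_2=c_0c_2-c_1^2=0$, and $h_6=-1$ matches $\det Q_3=1$. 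This cleanly explains the period-four shape, and in particular why the zeros occur exactly at $n\equiv 3\pmod 4$ and why the values at $n\not\equiv 3\pmod 4$ are forced to be $\pm 1$ rather than merely odd.

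However, the argument is not a proof, and the gap is exactly where you locate it: the two determinantal claims about $H(y)=1/r(y)$ --- that $\tilde h_m=0$ for even $m\ge 2$ and $|\tilde h_m|=1$ for odd $m$, and that $|\det Q_m|=1$ for all $m$ --- are themselves unproven statements of the same difficulty class as the original conjecture, and none of your three proposed routes is carried out. The obstacle you name is real: since $\tilde h_2=0$, the sequence $(c_k)$ admits no $\mathcal{S}$- or $\mathcal{J}$-fraction, so the determinant formula of the Appendix cannot be applied to $H$, and the functional equation $H(y)\bigl(H(y^2)+y\bigr)=H(y^2)$ by itself does not obviously yield the required halving recursion for the degenerate Hankel blocks. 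A concrete way to try to close the gap, staying within the paper's own material: since $1/r(y)=1-y\,r(y^2)/r(y)$, one has $c_{k+1}=-d_k$ where $(d_k)=1,-1,2,-3,4,-6,10,\ldots$ is the row-sum sequence discussed in Section 4, whose Hankel transform coincides with that of the shifted Rueppel sequence $r_{n+1}$ and is known to take values in $\{\pm 1\}$ by the results of \cite{SomeAHS}; a bordered-determinant (Desnanot--Jacobi) identity relating $\tilde h_m$ and $\det Q_m$ to the Hankel determinants of $(d_k)$ and its shift would then finish the argument. As written, though, you have established a correct reduction, not the conjecture.
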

We note further that the sequence with generating function $1-x\left(1-\frac{x}{c(x^2)}\right)$ has a Hankel transform that begins 
$$1, -2, -1, -1, 7, 11, 38, 51, 115, 144, 269,\ldots$$ which taken modulo $2$ begins 
$$1, 0, 1, 1, 1, 1, 0, 1, 1, 0, 1, 1, 1, 1, 0, 1, 1, 0,\ldots.$$ 
We conjecture that taken modulo $2$, this Hankel transform yields the periodic sequence with generating function $$\frac{1+x^2+x^3+x^4+x^5+x^7}{1-x^8}.$$ 
Turning now to the sequence with generating function $1-x\left(1-\frac{x}{r(x^2)}\right)$, we find that it has a Hankel transform that begins 
$$1, -2, -1, 1, 1, 1, -2, 1, 1, 2, 1, -1, 1, 1, -2, 1, 1, 2, 1, -1, -1, -1, \ldots.$$ 
\begin{conjecture} The Hankel transform of the sequence with generating function $1-x+\frac{x^2}{r(x^2)}$, taken modulo $2$, yields the periodic sequence $\overline{1,0,1,1,1,1,0,1}$. 
\end{conjecture}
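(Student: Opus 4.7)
The plan is to reduce the conjecture modulo $2$ to a statement about a Catalan-based sequence and then analyze the resulting highly sparse Hankel matrix. Since $r_n = C_n \bmod 2$, we have $r(x) \equiv c(x) \pmod{2}$ in $\mathbb{F}_2[[x]]$. Taking formal reciprocals gives $1/r(x^2) \equiv 1/c(x^2) \pmod{2}$, so the sequences with generating functions $1 - x + x^2/r(x^2)$ and $1 - x + x^2/c(x^2)$ coincide modulo $2$. Because every Hankel determinant is a polynomial in the sequence entries with integer coefficients, the two Hankel transforms then agree modulo $2$. In particular the final conjecture is equivalent to the mod-$2$ conjecture stated just above it in this example about the Catalan analog $1 - x(1 - x/c(x^2)) = 1 - x + x^2/c(x^2)$, so it suffices to prove the latter.

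Next I would put $f(x) = 1 - x + x^2/c(x^2)$ into explicit algebraic form. Using $c(x) = (1 - \sqrt{1 - 4x})/(2x)$ and rationalizing gives $1/c(x^2) = 1 - x^2 c(x^2)$, hence $f(x) = 1 - x + x^2 - x^4 c(x^2)$. The integer sequence is therefore $1, -1, 1, 0, -C_0, 0, -C_1, 0, -C_2, 0, \ldots$, and modulo $2$ it is the characteristic function of the set $S = \{0, 1, 2\} \cup \{2^k + 2 : k \geq 1\}$. The Hankel determinant $h_n$ modulo $2$ is then the parity of the number of permutations $\sigma$ of $\{0, 1, \ldots, n\}$ satisfying $i + \sigma(i) \in S$ for every $i$, equivalently the parity of the number of perfect matchings in a very sparse bipartite graph whose edges lie on the antidiagonals indexed by $S$.

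To extract the period-$8$ pattern I would pursue either of two routes. The first is to compute the Jacobi continued fraction $\mathcal{J}(a_k; b_k)$ of $f(x)$ over $\mathbb{Q}$; since the Hankel transform is a product in the $b_k$ with explicit exponents, the task reduces to pinning down the $2$-adic valuations $v_2(b_k)$ and checking that the induced mod-$2$ product has period $8$ matching $\overline{1, 0, 1, 1, 1, 1, 0, 1}$. The $b_k$ are in principle computable from the quadratic equation satisfied by $c(x^2)$ together with the three-term recurrence for the denominators of the successive convergents. The second route is combinatorial: since $S \cap [0, 2n]$ contains only $O(\log n)$ elements, one can group valid permutations by which antidiagonals they occupy and hunt for a recursion in $n \bmod 8$ driven by which exponential positions $2^k + 2$ have just entered the Hankel matrix.

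The hard part, in either approach, will be rigorously establishing the period-$8$ behavior. The Jacobi coefficients $b_k$ do not appear to admit a tidy closed form, and the matching counts depend intricately on the binary expansion of $n$; one expects the analysis to split into cases according to which exponential positions $2^k + 2$ lie in $[0, 2n]$. As a preliminary I would compute $h_n \bmod 2$ numerically for $n$ up to at least $64$ and search for an $8 \times 8$ block decomposition of the Hankel matrix whose determinant depends only on $n \bmod 8$. Success on the Catalan side would then immediately yield the Rueppel conjecture via the reduction in the first paragraph.
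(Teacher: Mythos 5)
This statement is left as a conjecture in the paper, so there is no ``paper proof'' to match; the question is whether your argument actually closes it, and it does not. Your opening reduction is correct and worth keeping: since $r_n=C_n\bmod 2$, the power series $r(x)$ and $c(x)$ agree over $\mathbb{F}_2$, both have unit constant term, so $1-x+x^2/r(x^2)\equiv 1-x+x^2/c(x^2) \pmod 2$ coefficientwise, and since each Hankel determinant is an integer polynomial in the sequence entries, the two Hankel transforms agree modulo $2$. This shows that the present conjecture is \emph{equivalent} to the mod-$2$ conjecture about $1-x\left(1-\frac{x}{c(x^2)}\right)$ stated just before it (whose claimed period-$8$ pattern with generating function $\frac{1+x^2+x^3+x^4+x^5+x^7}{1-x^8}$ is the same word $\overline{1,0,1,1,1,1,0,1}$). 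Your identification of the support set $S=\{0,1,2\}\cup\{2^k+2:k\ge 1\}$ via $1/c(x^2)=1-x^2c(x^2)$ is also correct, as is the observation that $h_n\bmod 2$ is the permanent of the corresponding $0$--$1$ Hankel matrix, i.e.\ the parity of the number of permutations $\sigma$ of $\{0,\dots,n\}$ with $i+\sigma(i)\in S$ for all $i$.

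The gap is that everything after the reduction is a plan, not a proof. You have replaced one open conjecture by another open conjecture of the same paper and then listed two strategies (2-adic valuations of the Jacobi coefficients $\beta_k$, or a case analysis of matchings driven by the positions $2^k+2$) without executing either; you yourself flag that ``the hard part \dots will be rigorously establishing the period-$8$ behavior.'' Neither route is routine: the $\beta_k$ of $1-x+x^2/c(x^2)$ are rationals without an evident closed form, so controlling $v_2\!\left(\prod_k\beta_k^{\,n+1-k}\right)$ requires information you have not derived; and the matching count depends on which thresholds $2^k+2$ lie in $[0,2n]$, so a statement depending only on $n\bmod 8$ needs a genuine inductive argument across scales of $n$, which is absent. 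Numerical verification up to $n=64$ would support the conjecture but proves nothing. As it stands, the proposal establishes a clean equivalence between two of the paper's conjectures --- a useful observation --- but does not prove the statement.
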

\end{example}
\begin{example} $\mathbf{x+\frac{1}{r(x^2)}}$. We consider the sequence 
$$1, 1, -1, 0, -1, 0, -2, 0, -5, 0, -14, 0, -42,0,\ldots$$ with generating function $x+\frac{1}{c(x^2)}$. This sequence then has the Hankel transform 
$$1,-2,3,-4,5,-6,\ldots$$ with generating function $\frac{1}{(1+x)^2}$. The sequence with generating function $x+\frac{1}{r(x^2)}$ begins 
$$1, 1, -1, 0, 1, 0, -2, 0, 3, 0, -4, 0, 6,0,\ldots.$$
Calculation shows that the Hankel transform of this sequence begins 
$$1, -2, -1, 2, -3, -2, -1, 2, -3, 4, 3, 2, -3,\ldots.$$
The sequence \seqnum{A005811}, begins 
$$0, 1, 2, 1, 2, 3, 2, 1, 2, 3, 4, 3, 2, \ldots.$$ This sequence counts the number of runs in the binary expansion of $n, (n>0)$. Alternatively, it gives the number of $1$'s in the Gray code for $n$.
We can formulate the following conjecture.
\begin{conjecture} The Hankel transform of the sequence with generating function $x+\frac{1}{r(x^2)}$ is a signed version of \seqnum{A005811}$(n+1)$.
\end{conjecture}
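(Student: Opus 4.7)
The plan is to attack the conjecture through a continued fraction expansion of $g(x) = x + \frac{1}{r(x^2)}$, following the template used elsewhere in the paper. Numerical experimentation should first identify a Stieltjes continued fraction $\mathcal{S}(a_1, a_2, a_3, \ldots)$ for $g(x)$ whose coefficient sequence is $2$-automatic; by analogy with $r(x)$ itself, whose S-fraction coefficients $2a_n - 1$ are governed by $\seqnum{A088567}(n+2) \bmod 2$, I would expect the $a_n$ for $g$ to admit an explicit description in terms of a related run-counting sequence on binary representations, with signs drawn from a second $2$-automatic sequence.

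Once the S-fraction is pinned down, the next step is to contract it to a Jacobi continued fraction $\mathcal{J}(\alpha_n;\beta_n)$ via the standard formulas $\beta_n = a_{2n-1} a_{2n}$ and $\alpha_n = a_{2n} + a_{2n+1}$, and then to apply Heilermann's formula
$$h_n = \prod_{k=1}^{n} \beta_k^{\,n-k+1}$$
to read off the Hankel transform. The absolute value $|h_n|$ should then telescope into a product that counts runs in the binary expansion of $n+1$, matching $\seqnum{A005811}(n+1)$, while the sign of $h_n$ is forced by the sign pattern of the $a_k$.

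To make this rigorous, I would exploit the functional equation $r(x) = 1 + x\,r(x^2)$, which induces a recursive relation between the S-fraction of $g(x)$ and that of $g(x^2)$. The $2$-automaticity of the coefficient sequence should manifest as an invariance of the continued fraction under a ``doubling'' operation that mimics the substitution $x \mapsto x^2$; verifying this invariance at the level of convergents and tracking its effect on the $\beta_k$ would close the argument by induction on the number of significant binary digits of $n$.

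The principal obstacle, as in the analogous proofs for the Rueppel sequence's own Hankel transform reported in \cite{SomeAHS}, is establishing the exact $2$-automatic formula for the S-fraction coefficients and proving it from the identity $r(x) = 1 + x\,r(x^2)$ rather than merely observing it numerically. Once that combinatorial identification is in place, the reduction to $\seqnum{A005811}(n+1)$ and the determination of the sign pattern should follow from elementary manipulations with the Heilermann product.
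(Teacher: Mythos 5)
This statement is one of the paper's open conjectures: the author gives only the first few terms of the Hankel transform, observes that they match a signing of \seqnum{A005811}$(n+1)$, and offers no proof. So there is no argument in the paper to compare yours against, and the real question is whether your proposal constitutes a proof. It does not. Every load-bearing step is deferred: you say numerical experimentation ``should'' identify a $2$-automatic S-fraction, the Heilermann product ``should'' telescope to the run-count, and the sign ``should'' follow from the sign pattern of the $a_k$. None of these is carried out. In particular you never write down the S-fraction or J-fraction coefficients of $x+\frac{1}{r(x^2)}$, never state (let alone prove) the $2$-automatic formula they satisfy, and never exhibit the telescoping that would turn $\prod_{k=1}^{n}\beta_k^{\,n-k+1}$ into $\pm\seqnum{A005811}(n+1)$. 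That telescoping is the entire content of the conjecture: for the product to collapse to a quantity that grows only like the number of binary runs of $n+1$, the $\beta_k$ must be very specific rationals (compare the Catalan analogue in the paper, where $\beta_k$ of the form $\frac{k(k+2)}{(k+1)^2}$ telescopes to $n+1$), and identifying and proving that structure is precisely the hard part.

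Two further points deserve attention before the strategy can even get started. First, the sequence $1,1,-1,0,1,0,-2,0,3,0,-4,0,6,0,\ldots$ has infinitely many zero terms, so the existence of the J-fraction (equivalently, the nonvanishing of all principal Hankel minors) must be checked rather than assumed; the conjectured values $\pm\seqnum{A005811}(n+1)$ are never zero, which is encouraging but is itself part of what is to be proved. Second, the phrase ``a signed version of'' leaves the sign pattern unspecified, so a complete proof must also pin down the signs, presumably as another $2$-automatic sequence in the spirit of Bacher's principle quoted in the introduction. Your overall plan --- S-fraction, contraction, Heilermann, induction via $r(x)=1+xr(x^2)$ --- is the natural one and mirrors how Allouche, Han and Shallit resolved earlier conjectures of this type, but as written it is a research programme, not a proof.
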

\end{example}
\begin{example} $\mathbf{1-x+\frac{x^2}{1-x^2 r(x^2)}}$. The sequence 
$$1, -1, 1, 0, 1, 0, 2, 0, 5, 0, 14, 0, 42, 0, \ldots$$ with generating function 
$$1-x+\frac{x^2}{1+x^2 c(x^2)}=1-x+x^2c(x^2)$$ has its Hankel transform given by 
$$1,0,-1,-2,-3,-4,-5,-6,\ldots,$$ with generating function 
$$1-\frac{x^2}{(1-x)^2}=\frac{1-2x}{(1-x)^2}.$$ 

The sequence which begins 
$$1, -1, 1, 0, 1, 0, 2, 0, 3, 0, 6, 0, 10, 0, 18, 0, 31, 0, \ldots$$ with generating function 
$$1-x+\frac{x^2}{1+x^2 r(x^2)}$$ has a Hankel transform which begins 
$$1, 0, -1, -2, 1, 2, 3, -2, 1, 2, 3,\ldots.$$
We then have the following conjecture. 
\begin{conjecture} The Hankel transform of the sequence with generating function $1-x+\frac{x^2}{1+x^2 r(x^2)}$ is given by \seqnum{A005811}$(n-1)$. 
\end{conjecture}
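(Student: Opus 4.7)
The plan is to simplify the generating function via the Rueppel functional equation, analyse the Hankel matrix through its chessboard sparsity structure, and finally match the resulting determinants with \seqnum{A005811}$(n-1)$ via an inductive $2$-automatic argument.

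First I would exploit $r(x)=1+xr(x^2)$, which rearranges to $x^2r(x^2)=xr(x)-x$ and hence
$$g(x) \;=\; 1-x+\frac{x^2}{1-x^2r(x^2)} \;=\; 1-x+\frac{x^2}{1+x-xr(x)}.$$
The denominator sign printed in the conjecture appears to be a typographical error: only the minus-sign version $1-x^2r(x^2)$ reproduces the stated coefficient list $1,-1,1,0,1,0,2,0,3,0,\ldots$, and the Catalan analogue in the same example likewise requires the minus sign in order that $1-x+\frac{x^2}{1-x^2c(x^2)}=1-x+x^2c(x^2)$ (which uses $c(x^2)-x^2c(x^2)^2=1$) be valid. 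With this corrected form, the even-indexed part of $g$ is, up to the leading $1-x$, the sequence $p_n=1,1,2,3,6,10,18,31,\ldots$ (\seqnum{A023359}, compositions of $n$ into powers of two) with generating function $\frac{1}{1-ur(u)}$ in $u=x^2$.

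Next I would exploit the sparsity $g_k=0$ for every odd $k\ge 3$. Writing $g(x)=A(x^2)-x$ with $A(u)=1+up(u)$, the Hankel matrix decomposes as $H_n(g)=H_n(A(x^2))-E$, where $E=e_0e_1^T+e_1e_0^T$ is a rank-$2$ correction. After permuting rows and columns to collect even indices first, $H_n(A(x^2))$ becomes block-diagonal with blocks that are ordinary Hankel matrices of $q=(q_k)_{k\ge 0}=(1,1,1,2,3,6,\ldots)$ and of its shift $p$. Applying the matrix-determinant lemma for the rank-$2$ perturbation yields
$$h_n \;=\; \epsilon_n \det Q_E \det Q_O \bigl[(1-(M^{-1})_{01})^2 - (M^{-1})_{00}(M^{-1})_{11}\bigr],$$
where $M=H_n(A(x^2))$ and $\epsilon_n$ is the sign of the even/odd reordering. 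This reduces the problem to computing the Hankel transforms of $p$ and $q$ together with two diagonal and one off-diagonal entries of $M^{-1}$, which I would analyse by propagating the $2$-automatic recursion $p_n=\sum_{2^k\le n}p_{n-2^k}$ through the sub-Hankel determinants.

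Finally I would match the resulting expression with the $2$-automatic recursion characterising \seqnum{A005811}, namely that appending a binary digit $\varepsilon$ to $n$ gives $A005811(2n+\varepsilon)=A005811(n)+[\varepsilon\neq n\bmod 2]$ for $n\ge 1$, reflecting whether the new digit extends a run or starts a new one. Because $p$ and $q$ are themselves driven by binary compositions, their Hankel sub-determinants should inherit exactly this 2-automatic behaviour. The Catalan companion computation $1-x+x^2c(x^2)$ with Hankel transform $1,0,-1,-2,-3,-4,\ldots$ provides a convenient sanity check: the same block decomposition and rank-$2$ correction produce a linear sequence in the Catalan world, and the replacement of $c$ by $r$ is precisely what substitutes run-counting for linearity.

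The main obstacle is the singular behaviour at $n=1$: the vanishing $h_1=g_0g_2-g_1^2=0$ forbids any standard Jacobi continued fraction and makes a naive Desnanot--Jacobi recursion divide by zero. I expect to circumvent this by working directly with the rank-$2$ perturbation formula above, which is insensitive to isolated zero minors, and by identifying the specific cancellation in the bracketed expression that forces $h_1=0$ without propagating. Tracking the sign pattern $1,0,-1,-2,1,2,3,-2,1,2,3,\ldots$ will be the most delicate step: it should factor as a product of $\epsilon_n$, the signs of $\det Q_E$ and $\det Q_O$, and the bracket term, and I expect the combined sign to be a $2$-automatic function of $n$ that can be read off the binary expansion in the same spirit as the A005811 magnitude itself.
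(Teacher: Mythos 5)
This statement is an open conjecture in the paper---no proof is given there---so there is nothing to compare your approach against; I can only assess it on its own terms. Your preliminary observations are sound and useful: the denominator sign in the paper's displayed formulas is indeed a typo (only $1-x^2r(x^2)$, equivalently $1+x-xr(x)$ via $r(x)=1+xr(x^2)$, reproduces the printed coefficients $1,-1,1,0,1,0,2,0,3,\ldots$, and likewise the Catalan companion needs $c(u)(1-uc(u))=1$), and the even/odd block decomposition of the Hankel matrix of the aerated part together with the rank-two correction $-(e_0e_1^T+e_1e_0^T)$ for the $-x$ term is a reasonable structural reduction. One small correction: $\epsilon_n=1$, since you conjugate by a permutation matrix, which leaves the determinant unchanged; also, by block-diagonality $(M^{-1})_{01}=0$, so your bracket would collapse to $1-(M^{-1})_{00}(M^{-1})_{11}$.

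There are, however, two genuine gaps. First, the matrix determinant lemma in the form you state requires $M=H_n(A(x^2))$ to be invertible, and it is not: already at $n=2$ the even block is $\begin{pmatrix}1&1\\1&1\end{pmatrix}$, so $\det M=0$ while $h_2=-1$; your formula returns $0\cdot[\cdots]$ with an undefined bracket. This is not the isolated $h_1=0$ degeneracy you flag---zeros of $\det Q_E$ and $\det Q_O$ recur (the target A005811$(n-1)$ itself vanishes, and the block Hankel transforms have their own zeros), so you would need the polynomial adjugate/compound-minor form of the rank-two update, valid for singular $M$, and the objects to control become bordered minors of the two block Hankel matrices rather than entries of $M^{-1}$. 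Second, and more fundamentally, the entire arithmetic content of the conjecture---that the Hankel determinants of $p_n$ (compositions of $n$ into powers of two) and of its shift combine to produce the binary run-count with the stated signs---is deferred to the sentence that they ``should inherit exactly this $2$-automatic behaviour.'' That is precisely the step that makes this a conjecture rather than a theorem, and nothing in the proposal indicates how the recursion $p_n=\sum_{2^k\le n}p_{n-2^k}$ is to be pushed through a determinant. As it stands, this is a plausible reduction strategy plus a restatement of the difficulty, not a proof.
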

Further insight into this example may be gained as follows. The Hankel transform of the parameterized sequence 
$$1, s, 1, 0, 1, 0, 2, 0, 5, 0, 14, 0, 42, 0, \ldots$$ is given by 
$$[1, 1, 0, 0, -1, -1, -2, -2, -3, -3, -4,\ldots]+s^2[0, -1, -1, -2, -2, -3, -3, -4, -4, -5, -5,\ldots].$$ 
In similar fashion, the Hankel transform of the sequence 
$$1, s, 1, 0, 1, 0, 2, 0, 3, 0, 6, 0, 10, 0, 18, 0, 31, 0, \ldots$$ is given by 
$$[1, 1, 0, 0, -1, -1, 0, 0, -1, -1, 0,\ldots]+s^2[0, -1, -1, -2, 2, 3, 3, -2, 2, 3, 3,\ldots].$$ 
We note that in both cases the sign of $s$ does not matter as the Hankel transform depends on $s^2$. 
\begin{conjecture} The Hankel transform of the sequence with generating function $1\pm x+\frac{x^2}{1+x^2 r(x^2)}$ is given by \seqnum{A005811}$(n-1)$.
\end{conjecture}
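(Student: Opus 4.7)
The plan is to reduce this statement to the preceding Conjecture, which covers only the $-x$ specialization, by showing that the Hankel transform is invariant under negation of the linear coefficient. The work is then concentrated in a single symmetry observation, with the genuine combinatorial content still residing in the previous conjecture.

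Introduce the one-parameter family
$$G_s(x)=1+sx+\frac{x^2}{1+x^2 r(x^2)},$$
whose coefficient sequence $(b_k)_{k\ge 0}$ is $1,s,1,0,1,0,2,0,3,0,6,\ldots$. Because $r(x^2)=\sum_{n\ge 0} x^{2^{n+1}-2}$ is an even power series, so is $\frac{x^2}{1+x^2 r(x^2)}$; hence $b_1=s$ is the only $s$-dependent entry, and $b_k=0$ for every odd $k\ge 3$.

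The crux is to prove $\det H_n(s)=\det H_n(-s)$ for every $n$, where $H_n(s)=(b_{i+j})_{0\le i,j\le n}$. I would do this by a diagonal conjugation: set $D=\operatorname{diag}(1,-1,1,-1,\ldots,(-1)^n)$ and verify entry by entry that $D\,H_n(-s)\,D=H_n(s)$. Indeed $(D\,H_n(-s)\,D)_{ij}=(-1)^{i+j}\,b_{i+j}$ with $b_1$ read as $-s$; if $i+j$ is even the sign is $+1$ and the entry is $s$-independent, if $i+j=1$ the signs combine to give $s$, and if $i+j\ge 3$ is odd the entry is $-0=0$. Taking determinants and using $(\det D)^2=1$ yields the parity claim, so the sequences arising from $G_{+1}$ and $G_{-1}$ share their Hankel transform term by term.

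Specializing $s=-1$ gives exactly the object of the preceding Conjecture, whose Hankel transform is conjecturally $\seqnum{A005811}(n-1)$; combining this with the diagonal-conjugation identity gives the full $1\pm x$ statement. The main remaining obstacle is therefore the preceding Conjecture itself: a natural line of attack would be to compute a Jacobi continued fraction for $G_{-1}(x)$, as in the earlier examples, and match its $(a_k,u_k)$ parameters with a combinatorial recursion counting runs in the binary expansion of $n$, along the lines sketched in the Appendix. That identification is where the real difficulty lies.
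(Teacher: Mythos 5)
Your symmetry reduction is correct and is essentially the same observation the paper itself makes: the text immediately before this conjecture notes that ``the sign of $s$ does not matter as the Hankel transform depends on $s^2$,'' and your diagonal conjugation $D\,H_n(-s)\,D=H_n(s)$ with $D=\operatorname{diag}((-1)^i)$ is a clean, rigorous way to justify that remark (it is the same $a_n\mapsto(-1)^n a_n$ invariance the paper invokes in its Proposition on $1+x-x^2r(x^2)$, and it works here precisely because, as you check, all odd-indexed coefficients other than $b_1$ vanish).

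However, be clear about what this buys: the statement in the paper is a \emph{conjecture}, offered without proof, and your argument does not prove it either. What you have shown is only that the two generating functions $1+x+\frac{x^2}{1+x^2r(x^2)}$ and $1-x+\frac{x^2}{1+x^2r(x^2)}$ have identical Hankel transforms, i.e.\ that the present conjecture is logically equivalent to the preceding one (the $-x$ case). The substantive claim --- that this common Hankel transform is a (signed) version of \seqnum{A005811}$(n-1)$ --- is exactly the part that remains open, and your closing paragraph only sketches a plausible strategy (extract a Jacobi continued fraction and match its $\beta_k$ parameters to the run-counting recursion) without carrying it out. So the proposal is a correct and worthwhile reduction, honestly flagged as such, but it should not be presented as a proof of the conjecture; the gap is the identification with \seqnum{A005811}, which is where all the difficulty lies.
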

When $s=0$, we have that the Hankel transform of the sequence with generating function $1+\frac{x^2}{1+x^2 r(x^2)}$ begins
$$1, 1, 0, 0, -1, -1, 0, 0, -1, -1, 0, 0, 1, -1, 0, 0, -1, -1, 0, 0, 1, 1, 0, 0, \ldots.$$ 
It is interesting to note that the sequence beginning
$$1, 1, 0, 1, 0, 2, 0, 3, 0, 6, 0, 10, 0, 18, 0, 31, 0, \ldots$$ with generating function $1+\frac{x}{1+x^2r(x^2)}$ has its generating function given by 
$$\mathcal{J}(1, -2, 2, 0, 0, -2, 0, 2, 0, -2, 2,\ldots; -1,-1,-1,\ldots),$$ while that of the Rueppel sequence with generating function $r(x)$ is given by 
$$\mathcal{J}(1, -2, 0, 0, 2, 0, -2, 0, 2, -2, 0,\ldots;-1,-1,-1,\ldots).$$ The Hankel transform of both these sequences is equal to $(-1)^{\binom{n+1}{2}}$. 

The sequence that begins 
$$1, 1, 0, 2, 0, 5, 0, 14, 0, 42, 0,\ldots$$ with generating function
$$\frac{1-(x^2-x)c(x^2)(x-x^2)}{1-x^2 c(x^2)}=\frac{1-2x^2+2x^3-\sqrt{1-4x^2}}{2x^3}$$ has a Hankel transform that begins
$$1, -1, -4, 1, 9, -1, -16, 1, 25, -1, -36, 1, 49,\ldots.$$ 
The sequence that begins $$1, 1, 0, 2, 0, 3, 0, 6, 0, 10, 0, 18, 0, 31, 0, 56, 0, 98, 0, 174, 0,\ldots,$$ whose generating function is given by $\frac{1-(x^2-x)r(x^2)(x-x^2)}{1-x^2 r(x^2)}$ has a Hankel transform $n_n$ that begins
$$1, -1, -4, 1, 9, -1, -4, 1, 9, -1, -16, 1, 9, -1, -4, 1, 9, -1, -16, 1, 25,\ldots.$$ 
The sequence $\sqrt{|h_{2n}|}$ begins 
$$1, 2, 3, 2, 3, 4, 3, 2, 3, 4, 5, 4, 3, 4, 3, 2, 3,\ldots.$$ We then have the following conjecture. 
\begin{conjecture} The Hankel transform $h_n$ of the sequence with generating function $$\frac{1-(x^2-x)r(x^2)(x-x^2)}{1-x^2 r(x^2)}$$ satisfies $\sqrt{|h_{2n}|}$=\seqnum{A088748}.
\end{conjecture}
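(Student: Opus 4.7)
The plan is to attack this via the Jacobi continued fraction machinery that underlies every other Hankel transform in the paper. Write $G(x)=\frac{1-(x^{2}-x)r(x^{2})(x-x^{2})}{1-x^{2}r(x^{2})}$; since $(x^{2}-x)(x-x^{2})=-x^{2}(1-x)^{2}$, this simplifies to
$$G(x)=\frac{1+x^{2}(1-x)^{2}\,r(x^{2})}{1-x^{2}\,r(x^{2})}.$$
The goal is to produce a J-fraction $G(x)=\mathcal{J}(\alpha_{0},\alpha_{1},\ldots;\beta_{1},\beta_{2},\ldots)$, from which the Hankel transform is $h_{n}=\prod_{k=1}^{n}\beta_{k}^{\,n-k+1}$. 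The conjecture will then be equivalent to showing that for every $n\geq 0$,
$$\prod_{k=1}^{2n}\beta_{k}^{\,2n-k+1}=\pm\bigl(\text{\seqnum{A088748}}(n)\bigr)^{2}.$$

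First I would compute, by direct contraction, the Stieltjes fraction of $y\mapsto y/(1-y)$ and $y\mapsto (1+(1-x)^{2}y)/(1-y)$ applied symbolically to $y=x^{2}r(x^{2})$, using the Rueppel J-fraction
$$r(x)=\mathcal{J}(1,-2,0,0,2,0,-2,0,2,-2,0,\ldots;-1,-1,-1,\ldots)$$
recalled in Section~2. The substitution $x\mapsto x^{2}$ halves the index density, so the resulting J-fraction for $x^{2}r(x^{2})$ has $\beta$-parameters coming from the Rueppel pattern at every second level, interleaved with trivial $1$'s or $-1$'s corresponding to the even powers that are forced to vanish. This is the mechanism that will make $h_{2n}$ a perfect square up to sign: the odd-indexed $\beta$'s will contribute in pairs at consecutive levels because of the $x\mapsto x^{2}$ dilation.

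The second step is to identify the nontrivial $\beta$'s with \seqnum{A088748}. Here one should look for an automatic recursion: the Rueppel sequence is generated by the substitution $r(x)=1+xr(x^{2})$, so the J-fraction parameters satisfy a self-similar doubling law. I expect that $|\beta_{2k}|$ is determined by the binary expansion of $k$, and that the partial products $\prod_{j\le k}|\beta_{2j}|$ telescope onto the binary-run statistic defining \seqnum{A088748}. The main obstacle is precisely this step: Rueppel-type J-fraction coefficients are 2-automatic but not algebraic, so matching the product with \seqnum{A088748} will require an induction on the 2-adic valuation of $n$ rather than a closed-form manipulation. One strategy to make this tractable is to first prove a companion identity for the Catalan analog $\frac{1-(x^{2}-x)c(x^{2})(x-x^{2})}{1-x^{2}c(x^{2})}$, whose Hankel transform $1,-1,-4,1,9,-1,-16,\ldots$ has $\sqrt{|h_{2n}|}=n+1$, and then track the "Catalan $\to$ Rueppel mod $2$ with signs" correspondence emphasized in the introduction to transport this identity across, with \seqnum{A088748} replacing $n+1$ in exactly the way \seqnum{A005811} replaces $n$ in the earlier conjectures of this section.
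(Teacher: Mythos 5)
This statement is labelled a conjecture in the paper, and the paper offers no proof of it --- only the computed initial terms of $h_n$ and of $\sqrt{|h_{2n}|}$ as numerical evidence. So there is no argument of the author's to compare yours against; the relevant question is whether your proposal closes the gap the paper leaves open. It does not. Your opening algebra is correct ($(x^2-x)(x-x^2)=-x^2(1-x)^2$, so the generating function is $\frac{1+x^2(1-x)^2 r(x^2)}{1-x^2 r(x^2)}$), and the reduction of the conjecture to the identity $\prod_{k=1}^{2n}\beta_k^{2n-k+1}=\pm(\seqnum{A088748}(n))^2$ via the Heilermann/Krattenthaler formula is the right normal form for the problem. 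But everything after that is a plan rather than a proof: you do not actually compute the $\beta_k$, you do not establish that $h_{2n}$ is a perfect square up to sign (the "interleaving" mechanism you describe is asserted, not derived --- note that the sequence $1,1,0,2,0,3,0,6,\ldots$ is not an even function of $x$, so the clean aerated-sequence structure that would force squares does not apply directly), and you explicitly defer the identification of the partial products with \seqnum{A088748}, which is the entire content of the conjecture.

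The final step is also methodologically unsound as stated: Bacher's principle ``Paperfolding $=$ Catalan modulo $2$ $+$ signs'' is a guiding analogy in this paper, not a transfer theorem, so proving the Catalan companion identity $\sqrt{|h_{2n}|}=n+1$ (which is easy, since $c(x)$ is algebraic and its J-fraction is explicit) gives you no licence to ``transport'' the identity to the Rueppel side. The hard part is precisely that $r(x)$ satisfies the Mahler-type functional equation $r(x)=1+xr(x^2)$ rather than an algebraic one, so its continued-fraction coefficients must be handled by a $2$-automatic induction of the kind carried out by Allouche, Han and Shallit for the earlier Barry conjectures. A viable completion would need (i) an explicit conjectural formula for the $\beta_k$ of $G(x)$, verified to satisfy a self-similar recursion inherited from $r(x)=1+xr(x^2)$, and (ii) an induction on the binary expansion of $n$ showing the weighted product telescopes to the run-length statistic defining \seqnum{A088748}. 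Neither step is present, so the proposal should be regarded as a research programme consistent with the paper's framework, not a proof.
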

\end{example}
\section{Generalized Rueppel sequences and Riordan arrays}
In this section, we consider two generalizations of the Rueppel sequence. 
These are the sequences that begin 
$$1, b, 0, b, 0, 0, 0, b, 0, 0, 0,\ldots$$ with generating function 
$$r_b(x)=b r(x)-(b-1)$$ 
and 
$$1, c, 0, b, 0, 0, 0, b, 0, 0, 0,\ldots$$ with generating function with generating function 
$$r_{b,c}(x)=b r(x)-(b-1)+(c-b)x.$$ Thus 
$$r_{b,c}=1+cx+b(x^3+x^7+x^{15}+x^{31}+\cdots).$$ 
Note that $r_b(x)=r_{b,b}(x)$ and $r(x)=r_{1,1}(x)$. 
We find that 
$$r_{b,c}(x)=\mathcal{S}\left(c,-c,-\frac{b}{c^2},\frac{b}{c^2},-c,c,-\frac{1}{b},\frac{1}{b},c,-c,\frac{b}{c^2},-\frac{b}{c^2},-c,c,-\frac{1}{b},\frac{1}{b},c,-c,\ldots\right).$$
In particular,
$$r_{b,1}=\mathcal{S}\left(1,-1,-b,b,-1,1,-\frac{1}{b},\frac{1}{b},1,-1,b,-b,-1,1,-\frac{1}{b},\frac{1}{b},1,-1,-b,b,-1,\ldots\right)$$ and 
$$r_b(x)=\mathcal{S}\left(b,-b,-\frac{1}{b},\frac{1}{b},-b,b,-\frac{1}{b},\frac{1}{b},b,-b,\frac{1}{b},-\frac{1}{b},-b,b,-\frac{1}{b},\frac{1}{b},b,-b,-\frac{1}{b},\frac{1}{b},\ldots\right).$$ 
\begin{conjecture} Let $s_{b,c}(n)$ be the sequence of Stieltjes parameters in 
$$r_{b,c}(x)=\mathcal{S}\left(c,-c,-\frac{b}{c^2},\frac{b}{c^2},-c,c,-\frac{1}{b},\frac{1}{b},c,-c,\frac{b}{c^2},-\frac{b}{c^2},-c,c,-\frac{1}{b},\frac{1}{b},c,-c,\ldots\right).$$
Then we have 
\begin{itemize}
\item $s(8n)=s(8n+5)=c$\\
\item $s(8n+1)=s(8n+4)=-c$\\
\item $s(8n+3)=-s(8n+2)=(-1)^n \frac{b}{c^2}$\\
\item $s(8n+7)=-s(8n+6)=(2 P(n)-1)\frac{1}{b}$,
\end{itemize} 
where $P(n)$ is the paper-folding sequence \seqnum{A014577}.
\end{conjecture}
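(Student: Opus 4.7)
\emph{Proof plan.} The strategy combines a scaling argument that reduces the general case to $b = c = 1$ with a block-of-eight induction driven by the functional equation
$$r_{b,c}(x) = 1 + cx + b x^{3} r(x^{4}),$$
which follows from $r(x) = 1 + x r(x^{2})$. The eight-periodicity in the conjecture should correspond to exactly eight Stieltjes steps per ``doubling'' in the underlying lacunary structure of $r$.

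First I would set up the scaling. Let $\mu_{k}$ denote the coefficients of $r_{b,c}(x)$, so $\mu_{0}=1$, $\mu_{1}=c$, and $\mu_{2^{j}-1}=b$ for $j \geq 2$, with all other $\mu_{k}=0$. Each Stieltjes parameter $s(n)$ is a ratio of consecutive principal and shifted Hankel determinants of $(\mu_{k})$, and the extreme sparsity of the moments forces each such determinant to be a single signed monomial in $b$ and $c$ (there is essentially one non-vanishing way to pair up nonzero entries along anti-diagonals). A direct degree count on these monomials pins down the $(b,c)$-factor of $s(n)$ as a function of $n \bmod 8$: $c$ at residues $0,5$; $-c$ at residues $1,4$; a factor $\pm b/c^{2}$ at residues $2,3$; and a factor $\pm 1/b$ at residues $6,7$. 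This recovers the claimed dependence on $b$ and $c$ and reduces the problem to the special case $b=c=1$, where only the signs remain to be determined.

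Second I would verify the $b=c=1$ case by induction on the block index $n$. The base case $n=0$ is a direct computation from $\mu_{0}, \dots, \mu_{7}$. For the inductive step, the substitution $x \mapsto x^{2}$ in the functional equation and the nesting $r_{b,c}(x) = 1 + cx + b x^{3} r_{1,1}(x^{4})$ let one relate the Hankel minors in block $n+1$ to those in block $n$, picking up a specific sign twist at each doubling. The fully periodic parameters ($s(8n)$, $s(8n+1)$, $s(8n+4)$, $s(8n+5)$) are preserved under this step, and the $(-1)^{n}$ alternation in $s(8n+2)$, $s(8n+3)$ follows from a single sign flip tracked through the recursion.

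The main obstacle is the paper-folding dependence in $s(8n+6)$ and $s(8n+7)$. Since $P(n)$ is $2$-automatic but not periodic, the corresponding Hankel-minor identity must itself have an automatic structure matching the defining recursion $P(2n+1)=P(n)$, $P(4n)=1$, $P(4n+2)=0$. The most promising route is to construct a signed non-intersecting lattice-path model (in the spirit of Lindstr\"om--Gessel--Viennot) computing a specific $2 \times 2$ Hankel minor, with the sign dictated by paper-folding; this would mirror Bacher's paper-folding/Catalan correspondence cited in the introduction. Producing this combinatorial match, and checking its compatibility with the block contraction above, is the crux of the proof.
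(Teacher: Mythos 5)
The statement you are addressing is left as a conjecture in the paper --- no proof is given there --- so the only question is whether your plan would close it, and in its present form it would not. Your first step contains a concrete error: the reduction to $b=c=1$ cannot be done by scaling, because substituting $x\mapsto\lambda x$ sends the coefficient of $x^{2^k-1}$ to $\lambda^{2^k-1}b$, which depends on $k$, so the family $r_{b,c}$ is not closed under any scaling except $\lambda=\pm1$ (which only yields $r_{b,c}(-x)=r_{-b,-c}(x)$). Relatedly, if the Stieltjes parameters were monomials homogeneous for the grading $\deg c=1$, $\deg b=3$ that such a scaling would induce, every $s(n)$ would have degree $1$; the conjectured value $\pm 1/b$ at residues $6,7$ has degree $-3$, so no degree count of this kind can pin down the $(b,c)$-factors. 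Moreover, your assertion that each Hankel determinant of the moment sequence is a single signed monomial is not a formality: distinct permutations in the determinant expansion can use the two cells with $i+j=1$ either zero or two times, so a priori the determinants are polynomials in $c$ of mixed degree, and proving the cancellation down to one monomial is essentially equivalent to the (also conjectural) monomial form $1,-c^2,-b^2,b^4,\ldots$ of the Hankel transform stated just after the conjecture.

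Your second step also cannot close as described. An induction carrying block $n$ to block $n+1$ cannot produce the conjectured answer, because the sign $2P(n)-1$ at positions $8n+6,8n+7$ is governed by the paper-folding sequence, which is $2$-automatic but not eventually periodic; no first-order recurrence in $n$ generates it. What is actually needed is a $2$-adic recursion relating blocks $2n$ and $2n+1$ to block $n$, matching $P(2n+1)=P(n)$, $P(4n)=1$, $P(4n+2)=0$, i.e., a contraction of the continued fraction compatible with the substitution $x\mapsto x^2$ in the functional equation $r_{b,c}(x)=1+cx+bx^3r(x^4)$. You gesture at this, and at a Lindstr\"om--Gessel--Viennot model for the relevant minors, but you explicitly leave its construction open and identify it as the crux. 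As it stands the proposal is a research programme rather than a proof; the realistic route is the continued-fraction machinery of Allouche, Han and Shallit cited in the introduction, which proves statements of exactly this type by such $2$-adic recursions on the Stieltjes data.
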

This allows us to conjecture that $s(n)$ has the following closed form:
$$s_{b,c}(n)=(1+(-1)^n)\left(\frac{b\left(1-P\left(\frac{n-2}{4}\right)\right)(1-(-1)^{\frac{n}{2}})}{4c^2}+\frac{c(-1)^{\frac{n}{4}}(1+(-1)^{\frac{n}{2}})}{4}\right)$$
$$\quad\quad -(1-(-1)^n)\left(\frac{b\left(1-P\left(\frac{n-3}{4}\right)\right)(1-(-1)^{\frac{n-1}{2}})}{4c^2}+\frac{c(-1)^{\frac{n-1}{4}}(1+(-1)^{\frac{n-1}{2}})}{4}\right).$$
The Hankel transform of the generalized Rueppel sequence 
$$1, c, 0, b, 0, 0, 0, b, 0, 0, 0,\ldots$$ then begins 
$$1,-c^2,-b^2,b^4,b^4,-b^4 c^2,-b^6,b^8,b^8,-b^8 c^2,-b^{10},b^{12},b^{12}\ldots.$$ 
We have 
$$r_{b,c}(x)=\cfrac{1}{1-\cfrac{s_{b,c}(0)x}{1-\cfrac{s_{b,c}(1)x}{1-\cdots}}}=\frac{1}{1-s_{b,c}(0)x r_{b,c}^{(1)}(x)}.$$
We are interested in exploring the sequence with generating function $r_{b,c}^{(1)}(x)$. Thus we have 
$$r_{b,c}^{(1)}(x)=\cfrac{1}{1-\cfrac{s_{b,c}(1)x}{1-\cfrac{s_{b,c}(2)x}{1-\cdots}}}.$$ We also have
$$r_{b,c}(x)=\frac{1}{1-cxr_{b,c}^{(1)}(x)} \Longrightarrow r_{b,c}^{(1)}(x)=\frac{1+\frac{b}{c}(x^2+x^6+\ldots)}{1+cx+b(x^3+x^7+\ldots)}.$$ This expands to give a sequence which begins
$$1,-c,\frac{b+c^3}{c}, -2b-c^3,\ldots.$$ The two cases $c=b$ and $c=1$ are of most interest to us. 

\textbf{Case $c=b$}. In this case, the generating function $r_b(x)$ is given by 
 $$r_b(x)=\frac{1+x^2+x^6+x^{14}+\cdots}{1+bx+bx^3+bx^7+bx^{15}+\cdots}.$$ This expands to give the polynomial sequence that begins
$$1,-b,b^2+1,-b \left(b^2+2\right),b^2 \left(b^2+3\right),-b \left(b^4+4 b^2+1\right),b^6+5 b^4+3 b^2+1,\ldots.$$ This polynomial sequence in $b$ has a coefficient matrix that begins 
$$\left(
\begin{array}{ccccccccc}
 1 & 0 & 0 & 0 & 0 & 0 & 0 & 0 & 0 \\
 0 & -1 & 0 & 0 & 0 & 0 & 0 & 0 & 0 \\
 1 & 0 & 1 & 0 & 0 & 0 & 0 & 0 & 0 \\
 0 & -2 & 0 & -1 & 0 & 0 & 0 & 0 & 0 \\
 0 & 0 & 3 & 0 & 1 & 0 & 0 & 0 & 0 \\
 0 & -1 & 0 & -4 & 0 & -1 & 0 & 0 & 0 \\
 1 & 0 & 3 & 0 & 5 & 0 & 1 & 0 & 0 \\
 0 & -2 & 0 & -6 & 0 & -6 & 0 & -1 & 0 \\
 0 & 0 & 4 & 0 & 10 & 0 & 7 & 0 & 1 \\
\end{array}
\right).$$
We have the following result which identifies this coefficient array as a Riordan array \cite{book, SGWW}.
\begin{proposition} The coefficient array for the expansion of $r_b(x)$ is given by the Riordan array 
$$\left(r(x^2), -xr(x^2)\right).$$ 
\end{proposition}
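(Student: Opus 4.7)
The plan is to start from the closed-form quotient expression for $r_b(x)$ that was displayed just before the proposition, rewrite its numerator and denominator in terms of $r(x^2)$, and then expand as a geometric series in $b$ to read off the columns of the Riordan array directly.

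First, I would note that since $r(x)=\sum_{n\geq 0}x^{2^n-1}$, substituting $x^2$ gives $r(x^2)=\sum_{n\geq 0}x^{2^{n+1}-2}=1+x^2+x^6+x^{14}+\cdots$, which is precisely the numerator. The denominator is $1+b(x+x^3+x^7+\cdots)=1+bx(1+x^2+x^6+\cdots)=1+bxr(x^2)$. Thus the expression in the ``Case $c=b$'' paragraph reduces to
$$r_b(x)=\frac{r(x^2)}{1+bxr(x^2)}.$$

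Second, treating $b$ as a formal parameter, I would expand via a geometric series:
$$r_b(x)=r(x^2)\sum_{k\geq 0}\bigl(-bxr(x^2)\bigr)^k=\sum_{k\geq 0}b^k\,r(x^2)\bigl(-xr(x^2)\bigr)^k.$$
Hence the coefficient of $b^k$ in $r_b(x)$, viewed as a power series in $x$, is exactly $r(x^2)\bigl(-xr(x^2)\bigr)^k$.

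Third, I would invoke the defining property of a Riordan array: the $k$-th column of $(g(x),f(x))$ has generating function $g(x)f(x)^k$. Taking $g(x)=r(x^2)$ and $f(x)=-xr(x^2)$, the $k$-th column of $(r(x^2),-xr(x^2))$ has generating function $r(x^2)(-xr(x^2))^k$, which matches the coefficient of $b^k$ computed above. Reading off the $(n,k)$-entry of the coefficient array of $r_b(x)$ therefore gives exactly the $(n,k)$-entry of the Riordan array, which is the claim. There is no real obstacle: the quotient form of $r_b(x)$ was already established from the Stieltjes continued fraction for $r_{b,c}(x)$ via $r_{b,c}(x)=\frac{1}{1-cxr_{b,c}^{(1)}(x)}$, so the entire argument is a one-line geometric series expansion followed by a direct appeal to the definition of a Riordan array.
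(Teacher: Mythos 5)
Your proof is correct and follows essentially the same route as the paper: both identify $r_b(x)=\frac{r(x^2)}{1+bxr(x^2)}$ and match this against the Riordan array $(r(x^2),-xr(x^2))$, the paper via its bivariate generating function $\frac{g(x)}{1-yf(x)}$ specialized at $y=b$, and you via the equivalent geometric-series expansion that exhibits the column generating functions $r(x^2)(-xr(x^2))^k$ directly.
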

\begin{proof} The bivariate generating function of the Riordan array $\left(r(x^2), -xr(x^2)\right)$ is given by $$\frac{r(x^2)}{1+xy r(x^2)}.$$ 
Specializing $y$ to $b$ then gives 
$$\frac{r(x^2)}{1+bxr(x^2)}=\frac{1+x^2+x^6+x^{14}+\cdots}{1+bx(1+x^2+x^6+\ldots)}=\frac{1+x^2+x^6+\ldots}{1+bx+bx^3+bx^7+\ldots}.$$
\end{proof}
The row sums of $(r(x^2), -xr(x^2))$ will have generating function $\frac{r(x^2)}{1+x r(x^2)}$. This corresponds to $b=1$. Noting that $r(x^2)=\frac{r(x)-1}{x}$, we have that 
$$\frac{r(x^2)}{1+x r(x^2)}=\frac{1}{x}\frac{r(x)-1}{r(x)}=\frac{r(x^2)}{r(x)}.$$ 
The row sums begin 
$$1, -1, 2, -3, 4, -6, 10, -15, 22, -34, 52,\ldots.$$ By the form of the generating function, this is the INVERT$(-1)$ transform of the expansion of $r(x^2)$, which is $r_{n+1}$. These two sequences thus share the same Hankel transform. This Hankel transform begins 
$$1, 1, -1, -1, -1, 1, -1, -1, -1, -1, 1, -1, -1, 1,\ldots.$$
\textbf{Case $c=1$}. In this case, we have 
$$r_{b,1}(x)=\frac{1+b(x^2+x^6+x^{14}+\cdots)}{1+x+bx^3+bx^7+bx^{15}+\cdots}.$$
This expands to give the polynomial sequence in $b$ that begins 
$$1,-1,b+1,-2 b-1,3 b+1,-b^2-4 b-1,3 b^2+6 b+1,-6 b^2-8 b-1,b^3+10 b^2+10 b+1,\ldots.$$ 
The coefficient array of this polynomial sequence in $b$ then begins 
$$\left(
\begin{array}{ccccccccc}
 1 & 0 & 0 & 0 & 0 & 0 & 0 & 0 & 0 \\
 -1 & 0 & 0 & 0 & 0 & 0 & 0 & 0 & 0 \\
 1 & 1 & 0 & 0 & 0 & 0 & 0 & 0 & 0 \\
 -1 & -2 & 0 & 0 & 0 & 0 & 0 & 0 & 0 \\
 1 & 3 & 0 & 0 & 0 & 0 & 0 & 0 & 0 \\
 -1 & -4 & -1 & 0 & 0 & 0 & 0 & 0 & 0 \\
 1 & 6 & 3 & 0 & 0 & 0 & 0 & 0 & 0 \\
 -1 & -8 & -6 & 0 & 0 & 0 & 0 & 0 & 0 \\
 1 & 10 & 10 & 1 & 0 & 0 & 0 & 0 & 0 \\
\end{array}
\right).$$ 
We then have the following proposition.
\begin{proposition} The coefficient array of $r_{b,1}(x)$ is given by the stretched Riordan array 
$$\left(\frac{-1}{1+x}, \frac{-x^3 r(x^4)}{1+x}\right),$$ 
with its first row removed.
\end{proposition}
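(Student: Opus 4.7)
The plan is to verify the proposition by computing the bivariate generating function of the proposed Riordan array, specializing the column indeterminate to $b$, and matching the result against the closed-form expression for $r_{b,1}(x)$ already obtained in the section. As is implicit in the displayed polynomial expansion $1,-1,b+1,-2b-1,3b+1,\ldots$, the symbol $r_{b,1}(x)$ here refers to the continued-fraction tail $r_{b,1}^{(1)}(x)$ rather than the originally-defined $1+x+bx^3+bx^7+\cdots$; this notational point should be recorded explicitly at the start of the proof.

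The direct computation is short. By the defining property of Riordan arrays, $(g(x),f(x))$ has bivariate generating function $g(x)/(1-yf(x))$. Substituting $g(x)=-1/(1+x)$ and $f(x)=-x^3 r(x^4)/(1+x)$ and clearing the common factor $1+x$ gives
$$G(x,y)=\frac{-1}{1+x+yx^3 r(x^4)}.$$
Setting $y=b$ produces a series whose constant term (the row-$0$ entry) is $-1$. Deleting this first row corresponds on generating functions to $G(x,b)\mapsto (G(x,b)+1)/x$, which, after combining fractions, equals
$$\frac{G(x,b)+1}{x}=\frac{1+bx^2 r(x^4)}{1+x+bx^3 r(x^4)}.$$

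It remains to recognize the right-hand side. Using $r(x)=\sum_{j\ge 0}x^{2^j-1}$ yields $x^2 r(x^4)=x^2+x^6+x^{14}+\cdots$ and $x^3 r(x^4)=x^3+x^7+x^{15}+\cdots$, so the displayed fraction is precisely the closed form for $r_{b,1}^{(1)}(x)$ obtained by specializing $c=1$ in the formula for $r_{b,c}^{(1)}(x)$ displayed earlier in the section. Expanding and grouping by powers of $b$ then reproduces the polynomial sequence $1,-1,b+1,-2b-1,3b+1,\ldots$ listed in the paper.

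The main obstacle is purely bookkeeping: first, one must notice the overloaded notation $r_{b,1}(x)=r_{b,1}^{(1)}(x)$ in this section; second, one must verify that removing the first row of a stretched Riordan array corresponds to $H(x)\mapsto (H(x)-H(0))/x$ at the level of the bivariate generating function, which is immediate from the definition of the coefficient array. Once these points are addressed, the proof reduces to the single algebraic identity above.
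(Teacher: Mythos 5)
Your proposal is correct and follows essentially the same route as the paper's own proof: compute the bivariate generating function $\frac{-1}{1+x+yx^3r(x^4)}$, delete the first row via $H\mapsto (H+1)/x$, specialize $y=b$, and match the resulting fraction $\frac{1+bx^2r(x^4)}{1+x+bx^3r(x^4)}$ against the closed form displayed for the $c=1$ case. Your explicit remark that $r_{b,1}(x)$ in this proposition denotes the continued-fraction tail $r_{b,1}^{(1)}(x)$ rather than the originally defined series is a helpful clarification that the paper leaves implicit.
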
 
\begin{proof} 
The bivariate generating function of the stretched Riordan array $\left(\frac{-1}{1+x}, \frac{-x^3 r(x^4)}{1+x}\right)$ is given by 
$$\frac{\frac{-1}{1+x}}{1+y \frac{x^3r(x^4)}{1+x}}=\frac{-1}{1+x+x^3yr(x^4)}.$$ 
Thus the generating function of the array obtained by removing the first row $(-1,0,0,0,\ldots)$ is given by 
$$\frac{1}{x}\left(\frac{-1}{1+x+x^3yr(x^4)}+1\right)=\frac{1+x^2yr(x^4)}{1+x+x^3yr(x^4)}.$$ 
Specializing $y$ to $b$ gives us 
$$\frac{1+bx^2r(x^4)}{1+x+bx^3r(x^4)}=\frac{1+bx^2(1+x^4+x^{12}+\ldots)}{1+x+bx^3(1+x^4+x^{12}+\ldots)}=\frac{1+bx^2+bx^6+\ldots}{1+x+bx^3+bx^7+\ldots}.$$
\end{proof}

\section{$1-r_n$, $r_{n+1}-r_n$, and the Josephus problem}
The $1$'s-complement of the Rueppel sequence $1-r_n$, \seqnum{A043545} which begins 
$$0, 0, 1, 0, 1, 1, 1, 0, 1, 1, 1,\ldots,$$ is of interest in itself for a number of reasons.
\begin{example} \textbf{The Josephus problem}. In this example, we consider the sequence $1-r_{n+2}$ which begins
$$1, 0, 1, 1, 1, 0, 1, 1, 1, 1, 1, 1, 1, 0, 1, 1, 1, 1, 1, 1, 1,\ldots.$$ The $0$'s occur at locations determined by the expansion of $\frac{1+2x}{(1-x)(1-2x)}$ which begins 
$$1, 5, 13, 29, 61, 125, 253, 509, 1021, 2045, 4093,\ldots.$$ This is the sequence with general term $4\cdot2^n-3$, \seqnum{A036563}. At these locations $i$ we now place a $-\frac{i+1}{2}$ instead of $0$, and we prepend $1,0$ to the resulting sequence. This gives us the sequence that begins
$$1, 0, 1, -1, 1, 1, 1, -3, 1, 1, 1, 1, 1, 1, 1, -7, 1, 1, 1, 1, 1, 1, 1,\ldots.$$ 
The partial sums of this sequence then begin 
$$1, 1, 2, 1, 2, 3, 4, 1, 2, 3, 4, 5, 6, 7, 8, 1, 2,\ldots.$$ This is \seqnum{A062050}, whose $n$-th term is given by $2+ n - 2^{\lfloor \log_2(n+1) \rfloor}$. 
Multiplying (termwise) the sequence 
$$1, 0, 1, -1, 1, 1, 1, -3, 1, 1, 1, 1, 1, 1, 1, -7, 1, 1, 1, 1, 1, 1, 1,\ldots, $$ by the sequence 
$$1,2,2,2,2,2,2,2,\ldots$$ gives us the sequence that begins 
$$1,0,2, -2, 2, 2, 2, -6, 2, 2, 2, 2, 2, 2, 2, -14, 2, 2, 2, 2, 2,\ldots.$$ 
The partial sums of this sequence begin
$$1, 1, 3, 1, 3, 5, 7, 1, 3, 5, 7, 9, 11, 13, 15, 1, 3,\ldots.$$ 
This is \seqnum{A006257}, the solution to the Josephus problem where every second element is chosen. The general term of this sequence is $3+ 2(n - 2^{\lfloor \log_2(n+1) \rfloor})$.
\end{example}
Out interest lies in the Hankel transforms of the complement of the Rueppel numbers $1-r_n$ and that of the first differences of the Rueppel numbers $r_{n+1}-r_n$. 
\begin{conjecture} Let $H_n$ be the Hankel transform of $1-r_n$, and let $h_n$ be the Hankel transform of $r_{n+1}-r_n$. Then we have 
$$ |h_n| = \sqrt{|H_{n+1}|-|H_n|}.$$
\end{conjecture}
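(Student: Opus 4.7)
My plan is to reformulate the conjecture in terms of the formal monic orthogonal polynomials (OPs) $p_k(x)$ of the Rueppel moment sequence, using the Jacobi continued fraction $r(x)=\mathcal{J}(1,-2,0,0,2,0,-2,\ldots\,;\,-1,-1,\ldots)$ recorded in Section~2. The $p_k$ satisfy the three-term recurrence
\[
p_{k+1}(x) = (x-\alpha_k)\,p_k(x) + p_{k-1}(x), \qquad p_{-1}=0,\; p_0=1,
\]
with $\alpha_k$ the paper-folding-like sequence $1,-2,0,0,2,0,-2,\ldots$; since $\beta_k\equiv -1$, one has $\|p_k\|^2=(-1)^k$.

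Two ingredients translate the conjecture into a statement about $p_k(1)$. First, writing $R_n^{(1)}=(r_{i+j+1})$ and noting that the Hankel matrix of $b$ is $B_n=R_n^{(1)}-R_n$, the standard Jacobi-matrix factorisation of the moment matrix gives
\[
h_n \;=\; \det(R_n)\,\det(J_n-I) \;=\; \pm\,p_{n+1}(1),
\qquad\text{so}\qquad h_n^2 = p_{n+1}(1)^2,
\]
where $J_n$ is the $(n+1)\times(n+1)$ Jacobi matrix whose characteristic polynomial is $p_{n+1}$. Second, since $A_n = J - R_n$ (with $J$ the all-ones matrix) is a rank-one perturbation of $-R_n$ by $e e^T$, the matrix determinant lemma together with the Christoffel--Darboux identity $e^T R_n^{-1} e = \sum_{k=0}^{n} p_k(1)^2/\|p_k\|^2$ yields
\[
H_n(a) \;=\; (-1)^{n+\binom{n+1}{2}}\,S_n,
\qquad S_n := \sum_{k=1}^{n}(-1)^k\,p_k(1)^2,
\]
so $|H_n(a)|=|S_n|$, and since $S_{n+1}-S_n = (-1)^{n+1}p_{n+1}(1)^2$, one has $|S_{n+1}-S_n| = h_n^2$ automatically.

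The conjecture $|h_n|^2 = \bigl||H_{n+1}|-|H_n|\bigr|$ therefore reduces to the assertion that $|S_{n+1}|-|S_n| = \pm(S_{n+1}-S_n)$, i.e.\ that $S_n$ never changes sign. Numerical computation overwhelmingly suggests $S_n \geq 0$ for all $n \geq 0$; grouping the alternating sum into consecutive pairs shows that this is equivalent to the block inequalities
\[
p_{2k}(1)^2 \;\geq\; p_{2k-1}(1)^2 \qquad (k \geq 1).
\]

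The principal obstacle is proving this even-versus-odd dominance of $p_k(1)^2$. The natural attack exploits the $2$-automatic structure: the self-similarity $r(x)=1+xr(x^2)$ forces a doubling rule on $\alpha_k$, which should transfer to the sequence $p_k(1)$. One would then induct on the dyadic level, expressing each block $(p_{2^m+j}(1))_{0\le j<2^m}$ in terms of $(p_j(1))_{0\le j<2^m}$ and verifying the pairwise inequality block-by-block; the sign bookkeeping through the paper-folding-governed $\alpha_k$ is the main technical difficulty. An alternative and possibly cleaner route is to seek a combinatorial interpretation of $S_n$---for example as weighted lattice paths respecting the Rueppel support $\{2^m-1\}$---that makes non-negativity manifest; given the Josephus-like sequence \seqnum{A006257} emerging in Section~5, such a combinatorial model is not out of reach.
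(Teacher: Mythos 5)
This statement appears in the paper only as a conjecture, with no proof offered, so there is nothing of the author's to compare your argument against; I can only assess it on its own terms. Your structural reduction is sound and checks out numerically: with the monic orthogonal polynomials attached to the Jacobi fraction $\mathcal{J}(1,-2,0,0,2,\ldots;-1,-1,\ldots)$ one does get $|h_n|=|p_{n+1}(1)|$ and $|H_n|=|S_n|$ with $S_n=\sum_{k=1}^{n}(-1)^k p_k(1)^2$ (I verified this against direct determinant computations up to $n=4$). Note, though, that you have silently amended the statement: as literally written the conjecture already fails at $n=2$, where $|H_3|=0<1=|H_2|$ and the radicand is negative, so the intended claim must be $|h_n|=\sqrt{\bigl||H_{n+1}|-|H_n|\bigr|}$, which is the version you actually analyse. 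That correction should be made explicit.

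Two genuine problems remain. First, the crux --- that $S_n$ never changes sign, equivalently $S_nS_{n+1}\ge 0$, which is what makes $\bigl||S_{n+1}|-|S_n|\bigr|=|S_{n+1}-S_n|$ --- is precisely the content of the conjecture, and you leave it unproven; so what you have is a clean reformulation, not a proof. Second, your proposed route to that sign condition rests on a false lemma: the ``block inequalities'' $p_{2k}(1)^2\ge p_{2k-1}(1)^2$ are not equivalent to $S_n\ge 0$ (pairing the alternating sum that way only bounds $S_n$ for even $n$ and says nothing about odd $n$, where the complementary pairing $p_{2k}(1)^2\ge p_{2k+1}(1)^2$ would be needed --- and that one fails at $k=3$), and the block inequality itself is false: the recurrence gives $p_k(1)=1,0,1,1,2,-1,1,2,3,-1,0,\ldots$, so $p_{10}(1)^2=0<1=p_9(1)^2$. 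The partial sums $S_n=0,0,1,0,4,3,4,0,9,8,8,\ldots$ do remain non-negative, so the target statement survives, but any proof will have to control the full partial sums globally --- presumably via the $2$-automatic doubling structure of the $\alpha_k$ --- rather than by term-by-term pairing.
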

Note that a similar result is true for both the Catalan numbers and the Motzkin numbers. 
\section{A conjecture concerning a product of Hankel transforms}
We finish this note with the following conjecture. We let $h_n$ denote the Hankel transform of the Rueppel sequence $r_n$, and we let $H_n$ denote the Hankel transform of the once shifted Rueppel sequence $r_{n+1}$. 
\begin{conjecture} The sequence $\frac{1+(-1)^n h_n H_n}{2}$ is equal to the sequence \seqnum{A268411}$(n+1)$, where the sequence \seqnum{A268411} gives the parity of the number of runs of $1$'s in the binary representation of $n$.
\end{conjecture}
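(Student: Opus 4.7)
The plan is to exploit the fundamental identity $r(x) = 1 + x\,r(x^2)$, which shows that the generating function of the shifted Rueppel sequence $r_{n+1}$ equals $r(x^2)$. Because this is a function of $x^2$, the $n$-th Hankel matrix $M=(r_{i+j+1})_{0\le i,j\le n}$ vanishes on every antidiagonal of odd sum. Permuting the rows and the columns by the \emph{same} permutation $\sigma$ that lists even indices first and odd indices afterwards block-diagonalises $M$; since $\det(P^TMP)=\det(M)$ for any permutation matrix $P$, no sign is accumulated. Using the relation $r_{2k+1}=r_k$, the top-left block is a Hankel matrix of the Rueppel sequence $r$, while the bottom-right block is a Hankel matrix of the shifted sequence $r_{n+1}$ of smaller size.

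A short size-count yields the two-case recursion
\begin{align*}
H_{2m}   &= h_m\,H_{m-1},\\
H_{2m-1} &= h_{m-1}\,H_{m-1},
\end{align*}
valid for $m\ge 1$ with base case $H_0=1$; the cases correspond to matrix size $2m{+}1$ (one more even index than odd) and $2m$ (equally many of each). Together with $h_n=(-1)^{\binom{n+1}{2}}$, the recursion forces $H_n\in\{\pm1\}$, so $\frac{1+(-1)^nh_nH_n}{2}$ automatically takes values in $\{0,1\}$ as the conjecture requires.

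Iterating the recursion mirrors the binary expansion of $n+1$: setting $N_k=n_k+1$, where $n_0=n$ and $n_{k+1}$ is the index appearing on the right-hand side at step $k$, one verifies that $N_{k+1}=\lfloor N_k/2\rfloor$, so the process is simply bit-stripping on $n+1$ and terminates after $\lfloor\log_2(n+1)\rfloor$ steps. The factor collected at each step is determined by the current low-order bit of $n+1$. On the other hand, the number $\rho(n)$ of runs of $1$'s in the binary expansion of $n$ satisfies the parallel recursion $\rho(2m)=\rho(m)$ and $\rho(2m+1)=\rho(m)+[m\text{ even}]$, so both sides of the claimed identity are controlled by the same bit-stripping process, which strongly suggests the conjecture is amenable to induction.

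The proof then proceeds by strong induction on $n$, splitting into the cases $n=2m$ and $n=2m-1$, and, for the odd case, further on the parity of $m$. The base case $H_0=1$ matches $2\,\seqnum{A268411}(1)-1=1$ immediately. Each inductive step reduces to an elementary mod-$2$ identity involving $n$, $\lfloor n/2\rfloor$, $\binom{m+1}{2}$, and the last bit of $m$. I expect the main obstacle to be precisely this parity bookkeeping: the predicted sign $(-1)^nh_nH_n$ compresses contributions from $n$, $\binom{n+1}{2}$, and the recursive product into a single value in $\{\pm1\}$, while $2\,\seqnum{A268411}(n+1)-1$ is defined purely combinatorially on binary digits, so matching them requires careful case analysis according to $n\bmod 4$. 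Once this is carried out, both sides agree by induction and the conjecture follows.
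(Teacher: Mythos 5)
The statement you are proving is left as a conjecture in the paper --- no proof is offered there --- so there is nothing to compare your route against; what follows is an assessment of your proposal on its own terms. Your central reduction is correct and, as far as I can tell, new relative to the paper. Since $r(x)=1+xr(x^2)$, the shifted sequence $s_n=r_{n+1}$ has generating function $r(x^2)$, hence vanishes at odd indices, and the simultaneous even/odd permutation of rows and columns block-diagonalises the Hankel matrix without a sign. Using $r_{2k+1}=r_k$ the two blocks are exactly the Hankel matrices you claim, and the size count gives $H_{2m}=h_mH_{m-1}$ and $H_{2m-1}=h_{m-1}H_{m-1}$ with $H_0=1$; I checked that this recursion reproduces the values $1,1,-1,-1,-1,1,-1,-1,-1,-1,1,\ldots$ that the paper lists for the Hankel transform of $r_{n+1}$. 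Note that your argument inherits the identity $h_n=(-1)^{\binom{n+1}{2}}$, which the paper asserts and which is itself one of the conjectures proved by Allouche, Han and Shallit; you should cite that explicitly rather than treat it as free.

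The gap is that you stop exactly at the step that constitutes the proof: you ``expect'' the parity bookkeeping to work out but do not perform it. It does work out, and the check is short enough that omitting it leaves the argument a plan rather than a proof. Concretely, set $u_n=(-1)^nh_nH_n$; the claim is $u_n=-(-1)^{\rho(n+1)}$ with $\rho$ the number of runs of $1$'s. Substituting your recursion and using $\rho(2m+1)=\rho(m)+[m\text{ even}]$, $\rho(2m)=\rho(m)$, the induction reduces to the two identities $h_{2m}\,h_m\,h_{m-1}\,(-1)^{m-1}=(-1)^{[m\text{ even}]}$ (even case $n=2m$) and $h_{2m-1}(-1)^m=1$ (odd case $n=2m-1$), both of which follow from the period-$4$ pattern $h_n=1$ for $n\equiv 0,3\pmod 4$ and $h_n=-1$ for $n\equiv 1,2\pmod 4$ by checking $m\bmod 4$. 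With those four-case verifications written out, your argument is a complete proof of the conjecture; without them it is only a (correct) strategy.
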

\section{Appendix}
In this Appendix we briefly look at the link between generating functions expressible as continued fractions and Hankel transforms, and we give some relevant information on Riordan arrays.

\textbf{\underline{Continued fractions and Hankel transforms}}

In the case that a sequence $a_n$ has a generating function  $g(x)$ expressible in the form of a Jacobi continued fraction,
$$g(x)=\cfrac{a_0}{1-\alpha_0 x-
\cfrac{\beta_1 x^2}{1-\alpha_1 x-
\cfrac{\beta_2 x^2}{1-\alpha_2 x-
\cfrac{\beta_3 x^2}{1-\alpha_3 x-\cdots}}}}$$ then
we have \cite{Kratt}
\begin{equation}\label{Kratt} h_n = a_0^{n+1} \beta_1^n\beta_2^{n-1}\cdots \beta_{n-1}^2\beta_n=a_0^{n+1}\prod_{k=1}^n
\beta_k^{n+1-k}.\end{equation}
Note that this is independent from $\alpha_n$.
In the case of $a_n$ having a generating function given by a Stieltjes continued fraction 
$$\cfrac{a_0}{1-\cfrac{\alpha_1 x}{1-\cfrac{\alpha_2 x}{1-\cdots}}},$$ then we have 
$$|a_{i+j}|_{0\le i,j \le n-1}=a_0^n (\alpha_1\alpha_2)^n(\alpha_3\alpha4)^{n-2}\cdots (\alpha_{2n-5}\alpha_{2n-4})^2 \alpha_{2n-3}\alpha_{2n-2}.$$ 

\textbf{\underline{Riordan arrays}}

A Riordan array \cite{book, SGWW} can be visualized as a matrix $(m_{n,k})$ whose elements are given by 
$$m_{n,k}=[x^n] g(x)f(x)^k,$$ 
where 
$$g(x)=g_0+g_1 x+ g_2 x^2 + \cdots$$ is a power series with $g_0 \ne 0$, and 
$$f(x)=f_1 x + f_2 x^2 + \cdots$$ where $f_1 \ne 0$. The coefficients may be from any ring that is of interest to us. In combinatorics, this is often $\mathbb{Z}$. 
Here, $[x^n]$ denotes the linear functional that extracts the coefficient of $x^n$ in the power series that it acts on. The set of such matrices forms a group, called the Riordan group. 

The bivariate generating function of the matrix $(m_{n,k})_{0 \le n,k \le \infty}$, as a two-dimensional array, is given by
$$\frac{g(x)}{1-yf(x)}.$$ The matrix $(m_{n,k})$ is a matrix representation of the couple $(g(x), f(x))$.
The ``fundamental theorem of Riordan arrays'' prescribes the action of $(g(x), f(x))$ on a power series. We have
$$(g(x), f(x))\cdot h(x)=g(x) h(f(x)).$$
This corresponds to multiplying the vector whose elements are the coefficients of the power series $h(x)$ by the matrix $(m_{n,k})$.

\bigskip
\hrule

\noindent 2010 {\it Mathematics Subject Classification}: 
Primary 11B50; Secondary 05A15, 11B83, 11C20, 11Y55, 15B36.
\noindent \emph{Keywords:} Rueppel sequence, paper-folding sequence, Hankel transform, Riordan array.

\bigskip
\hrule
\bigskip
\noindent (Concerned with sequences
\seqnum{A000108},
\seqnum{A005811},
\seqnum{A006257},
\seqnum{A014577},
\seqnum{A036563},
\seqnum{A036987},
\seqnum{A037834},
\seqnum{A043725},
\seqnum{A062050},
\seqnum{A088567},
\seqnum{A088748},
\seqnum{A110036},
\seqnum{A126983},
\seqnum{A268411},
\seqnum{A268411} and 
\seqnum{A339422}.)

\end{document}